\setlist[enumerate]{%
  leftmargin=*,%
  label=(\roman*)} 
\setlist[itemize]{leftmargin=*}
\newtheorem{theorem}{Theorem}[section]
\newtheorem{lemma}[theorem]{Lemma}
\newtheorem{proposition}[theorem]{Proposition}
\newtheorem{corollary}[theorem]{Corollary}
\theoremstyle{definition}%
\newtheorem{example}[theorem]{Example}
\newtheorem{remark}[theorem]{Remark}
\newtheorem{algorithm}[theorem]{Algorithm}
\DeclareMathOperator{\hilb}{Hilb}
\DeclareMathOperator{\PP}{\mathbb{P}}
\DeclareMathOperator{\Macaulaytree}{\mathscr{M}}
\DeclareMathOperator{\hilbtree}{\mathscr{H}}
\newcommand{\hf}{\mathsf{h}}
\newcommand{\hp}{\mathsf{p}}
\newcommand{\hq}{\mathsf{q}}
\newcommand{\lift}{\Phi}
\newcommand{\plus}{A}
\newcommand{\hs}{\mathsf{H}}
\newcommand{\hk}{\mathsf{K}}
\newcommand{\kk}{\mathbb{K}}
\newcommand{\NN}{\mathbb{N}}
\newcommand{\QQ}{\mathbb{Q}}
\newcommand{\ZZ}{\mathbb{Z}}
\newcommand{\RR}{\mathbb{R}}
\DeclareMathOperator{\lexg}{\succ}
\DeclareMathOperator{\prob}{Pr}
\DeclareMathOperator{\pr}{pr}
\newcommand{\relphantom}[1]{\mathrel{\phantom{ #1 }}}
\begin{document}

\begin{abstract}
  We investigate the geography of Hilbert schemes parametrizing closed
  subschemes of projective space with specified Hilbert polynomials.
  We classify Hilbert schemes with unique Borel-fixed points via
  combinatorial expressions for their Hilbert polynomials.  These
  expressions naturally lead to an arrangement of nonempty Hilbert
  schemes as the vertices of an infinite full binary tree.  Here we
  discover regularities in the geometry of Hilbert schemes.
  Specifically, under natural probability distributions on the tree,
  we prove that Hilbert schemes are irreducible and nonsingular with
  probability greater than $0.5$.
\end{abstract}

\title{The Ubiquity of Smooth Hilbert Schemes}
\author{Andrew~P.~Staal}%
\email{a.staal@queensu.ca}
\address{ Department of Pure Mathematics \\
  University of Waterloo \\
  200 University Avenue West \\
  Waterloo, Ontario, Canada, N2L 3G1 }%
\date{}
\maketitle

\section{Introduction}

Hilbert schemes parametrizing closed subschemes with a fixed Hilbert
polynomial in projective space are fundamental moduli spaces.
However, with the exception of Hilbert schemes parametrizing
hypersurfaces \cite[Example~2.3]{Arbarello--Cornalba--Griffiths--2011}
and points in the plane \cite{Fogarty--1968}, the geometric features
of typical Hilbert schemes are still poorly understood.  Techniques
for producing pathological Hilbert schemes are known, generating
Hilbert schemes with many irreducible components
\cite{Iarrobino--1972, Fantechi--Pardini--1996}, with generically
nonreduced components \cite{Mumford--1962}, and with arbitrary
singularity types \cite{Vakil--2006}.  This raises the questions: What
should we expect from a random Hilbert scheme?  Can we understand the
geography of Hilbert schemes?  Our answer is that the set of nonempty
Hilbert schemes forms a collection of trees and a discrete probability
space, and that irreducible, nonsingular Hilbert schemes are
unexpectedly common.

Let $\hilb^{\hp}(\PP^n)$ be the Hilbert scheme parametrizing closed
subschemes of $\PP^n_{\kk}$ with Hilbert polynomial $\hp$, where $\kk$
is a field.  Macaulay classified Hilbert polynomials of homogeneous
ideals in \cite{Macaulay--1927}.  Any such admissible Hilbert
polynomial $\hp(t)$ has a unique combinatorial expression of the form
$\sum_{j=1}^r \binom{t+b_j-j+1}{b_j}$, for integers $b_1 \ge b_2 \ge
\dotsb \ge b_r \ge 0$.  Our main result is the following theorem.

\begin{theorem}
  \label{thm:SSSunique2intro}
  The lexicographic ideal is the unique saturated strongly stable
  ideal of codimension $c$ with Hilbert polynomial $\hp$ if and only
  if at least one of the following holds:
  \begin{enumerate}
  \item $b_r > 0$,
  \item $c \ge 2$ and $r \le 2$,
  \item $c = 1$ and $b_1 = b_r$, or
  \item $c = 1$ and $r - s \le 2$, where $b_1 = b_2 = \dotsb = b_{s} >
    b_{s+1} \ge \dotsb \ge b_r$.
  \end{enumerate}
  If $\kk$ is algebraically closed, then the lexicographic ideal is
  the unique saturated Borel-fixed ideal of codimension $c$ with
  Hilbert polynomial $\hp$ if and only if at least one of (i)--(iv)
  holds.
\end{theorem}

Strongly stable ideals, including lexicographic ideals, are
Borel-fixed.  In characteristic $0$, the converse also holds.  Many
fundamental properties of Hilbert schemes have been understood through
these ideals.  Hartshorne and later Peeva--Stillman found rational
curves linking Borel-fixed points, proving connectedness
\cite{Hartshorne--1966, Peeva--Stillman--2005}.  Bayer used them to
derive defining equations and proposed studying their tangent cones
\cite{Bayer--1982}.  Reeves, followed by Pardue, studied their
combinatorial properties to give bounds for radii of Hilbert schemes
\cite{Reeves--1995, Pardue--1994} and Reeves--Stillman proved that
lexicographic points are nonsingular \cite{Reeves--Stillman--1997}.
Gotzmann also discovered some irreducible Hilbert schemes in
\cite{Gotzmann--1989}.  Theorem~\ref{thm:SSSunique2intro} advances
this line of inquiry, identifies a large collection of well-behaved
Hilbert schemes that includes Gotzmann's examples, and improves our
understanding of the geography of Hilbert schemes.

To make a quantitative statement about all Hilbert schemes, we
interpret Macaulay's classification as follows: First, identify any
admissible Hilbert polynomial $\hp$ with its sequence $b = (b_1, b_2,
\ldots, b_r)$.  These sequences are generated by two operations,
namely ``integrating'' $\hp$ to $\lift (\hp) := b + (1,1, \ldots, 1)$
and ``adding one'' to $\hp$ to get $\plus (\hp) := 1 + \hp = (b, 0)$.
The set of all such sequences forms an infinite full binary tree.
There is an associated tree, which we denote $\hilbtree_c$, whose
vertices are the Hilbert schemes $\hilb^{\hp}(\PP^n)$ parametrizing
codimension $c = n - \deg \hp$ subschemes, for each positive $c \in
\ZZ$.  Geometrically, $\lift$ corresponds to coning over parametrized
schemes and $\plus$ to adding a point.  We endow $\hilbtree_c$ with a
natural probability distribution, in which the vertices at a fixed
height are equally likely.  This leads to our second main result.

\begin{theorem}
  \label{thm:Hilbertprobintro}
  Let $\kk$ be algebraically closed or have characteristic $0$.  The
  probability that a random Hilbert scheme is irreducible and
  nonsingular is greater than $0.5$.
\end{theorem}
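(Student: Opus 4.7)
The plan is to leverage Theorem~\ref{thm:SSSunique2intro} as the engine producing irreducible nonsingular Hilbert schemes, and then show that the probability measure described in the introduction places mass greater than $1/2$ on the ``good'' vertices of the trees $\hilbtree_c$. Recall that when the lexicographic ideal is the unique saturated Borel-fixed ideal with a given Hilbert polynomial, the corresponding Hilbert scheme has a unique Borel-fixed point, which is known to be smooth by \cite{Reeves_Stillman_1997}; combined with connectedness \cite{Hartshorne_1966, Peeva_Stillman_2005} and the fact that every component contains a Borel-fixed point in characteristic zero, this forces $\hilb^{\hp}(\PP^n)$ to be irreducible and nonsingular. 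Thus Theorem~\ref{thm:SSSunique2intro} gives a purely combinatorial sufficient condition for the event we are trying to measure, and the whole problem reduces to estimating the measure of this combinatorial set.

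The next step is to make the probability space explicit using the binary tree structure. For each codimension $c$, I would parametrize the vertices of $\hilbtree_c$ at each height $h$ by the Macaulay expansions $\sum_{j=1}^r \binom{t+b_j-(j-1)}{b_j}$ corresponding to that height, and read off conditions (i) and (ii) of Theorem~\ref{thm:SSSunique2intro} directly on these expansions. Conditioning on $c$, the probability that a Hilbert scheme in $\hilbtree_c$ is good is the limit (or infimum) over heights $h$ of the fraction
\[
  \frac{\#\{\text{good vertices at height } h\}}{\#\{\text{vertices at height } h\}}.
\]
The key combinatorial computation is to bound this fraction from below on each tree: for $c \ge 2$ the only excluded sequences are those with $b_r = 0$ and $r \ge 3$, which form a strict subset of the vertex set that should shrink quickly with height; for $c = 1$ one additionally allows constant strings or short tails, excluding only sequences with an interior drop not compensated by a short prefix. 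These conditions are designed to be combinatorially transparent, so one can count them.

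After these conditional estimates, I would combine them with the distribution on $c$ to obtain the unconditional probability. The expectation is that for each $c$ the conditional probability of being good exceeds some uniform constant, and in fact tends to $1$ as $c$ grows (since the $b_r > 0$ condition is generic once $c$ is large, making the parameter space essentially unrestricted). Summing over $c$ against the prescribed distribution, with the dominant tail contributions coming from large $c$, should yield a total probability strictly exceeding $0.5$, possibly with room to spare.

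I expect the main obstacle to be the codimension-one case, where condition (ii) of Theorem~\ref{thm:SSSunique2intro} is more delicate: the exclusion of sequences with an interior drop and a short constant prefix is a genuine non-trivial constraint, and the exact count of good versus bad vertices at each height requires care. A secondary difficulty is choosing and justifying the height distribution: the statement presupposes that vertices at equal height are equiprobable, but the weighting across heights (and across codimensions) is a design choice that must be made precise and then controlled. Once these issues are resolved, the final inequality should follow by combining the per-codimension bounds, with the easy $c \ge 2$ contributions absorbing any slack from the tighter $c = 1$ analysis.
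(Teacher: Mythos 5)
Your reduction to combinatorics is the right one and matches the paper: Theorem~\ref{thm:SSSunique2intro} plus the fact that a unique Borel-fixed point forces irreducibility and nonsingularity (the paper's Lemma~\ref{lem:Hilbertirred}; note that for \emph{nonsingularity} of the whole scheme, smoothness of the lexicographic point and connectedness are not enough --- you also need the generic-initial-ideal degeneration and upper semicontinuity of the normal sheaf cohomology to rule out singular points away from the Borel-fixed locus). The genuine gap is in the counting step, which you leave as a plan guided by two incorrect heuristics. First, the bad vertices do not ``shrink quickly with height'': for $c \ge 2$ a vertex at height $h$ is bad exactly when its word in $\{\plus,\lift\}$ ends (as last-applied operation) in $\plus$ and contains at least two $\plus$'s, so the bad fraction is $\tfrac{1}{2}\bigl(1 - 2^{-(h-1)}\bigr) \to \tfrac{1}{2}$. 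Second, the conditional probability of being good does not tend to $1$ as $c$ grows --- condition (i) is the same for every $c \ge 2$, and the good fraction at height $h$ is $\tfrac{1}{2} + O(2^{-h})$, not close to $1$. So your plan of ``easy $c\ge2$ contributions absorbing slack from $c=1$'' cannot work as stated; there is essentially no slack anywhere.

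The observation that actually closes the argument, and which the paper uses, is that applying $\lift$ increments every $b_j$, so the $\lift$-child of \emph{any} vertex has $b_r > 0$ and is therefore good by Theorem~\ref{thm:SSSunique2intro} (uniformly in $c$, including $c=1$; your worry about the codimension-one case is unnecessary). Hence at every height $k \ge 1$ at least $2^{k-1}$ of the $2^k$ vertices are good, and the root is good. Under the equal-weighting-within-a-height convention this gives
\[
\prob\bigl(\irr_c = 1\bigr) \;\ge\; f_c(0) + \sum_{k \ge 1} \frac{2^{k-1}}{2^{k}}\, f_c(k) \;=\; \frac{1}{2} + \frac{f_c(0)}{2} \;>\; \frac{1}{2},
\]
and the strict inequality comes entirely from the positive mass $f_c(0)$ at the root --- not from any density of good vertices exceeding one half at large heights. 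Without identifying the ``every $\lift$-child is good'' mechanism and the role of the root, your proposal does not yield the bound $>0.5$.
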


This theorem counterintuitively suggests that the geometry of the
majority of Hilbert schemes is understandable.  To prove
Theorems~\ref{thm:SSSunique2intro} and \ref{thm:Hilbertprobintro}, we
study the algorithm generating saturated strongly stable ideals first
described by Reeves \cite{Reeves--1992} and later generalized in
\cite{Moore--2012, Cioffi--Lella--Marinari--Roggero--2011}.  We obtain
precise information about Hilbert series and $K$-polynomials of
saturated strongly stable ideals.  The primary technical result we
need is the following.

\begin{theorem}
  \label{thm:Kpolydegreeintro}
  Let $I \subset \kk[x_0, x_1, \dotsc , x_n]$ be a saturated strongly
  stable ideal with Hilbert polynomial $\hp$, let $L^{\hp}_n$ be the
  corresponding lexicographic ideal in $\kk[x_0, x_1, \dotsc, x_n]$,
  and let $\hk_I$ be the numerator of the Hilbert series of $I$.  If
  $I \ne L^{\hp}_n$, then we have $\deg \hk_{I} < \deg
  \hk_{L^{\hp}_n}$.
\end{theorem}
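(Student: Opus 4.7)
My plan is to use the Eliahou--Kervaire resolution of Borel ideals to obtain a compact closed form for $\hk_I$. Combining the Eliahou--Kervaire Betti numbers $\beta_{i,i+d}(I) = \sum_{u \in G(I)_d}\binom{m(u)}{i}$, where $G(I)$ denotes the set of minimal monomial generators of $I$ and $m(u) = \max\{i : x_i \mid u\}$, with the identity $\hs_I(t) = \hk_I(t)/(1-t)^{n+1}$, one obtains
\[
\hk_I(t) \;=\; \sum_{u \in G(I)} t^{\deg u}\,(1-t)^{m(u)}.
\]
Setting $\rho(I) := \max_{u \in G(I)}(m(u) + \deg u)$, this formula immediately gives $\deg \hk_I \le \rho(I)$, with equality precisely when the top-degree coefficient $\sum_{u :\, m(u)+\deg u = \rho(I)}(-1)^{m(u)}$ is nonzero. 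A short structural argument about the Borel generators realizing this maximum shows that this sum cannot cancel, so in fact $\deg \hk_I = \rho(I)$.

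I would then compute $\rho(L)$ for the lex ideal $L = L^{\hp}_n$ via its explicit construction from the Macaulay representation $\hp(t) = \sum_{j=1}^r \binom{t+b_j-(j-1)}{b_j}$. The minimal generators of $L$ are determined combinatorially by the $b_j$'s, and the distinguished minimal generator realizing $\rho(L)$ can be read off explicitly: in the typical case it takes the form $x_{n-1}^r$ and yields $\rho(L) = (n-1)+r$, with degenerate Macaulay representations handled as separate cases. The theorem thus reduces to proving the strict inequality $\rho(I) < \rho(L)$ for every saturated Borel ideal $I \ne L$ with the same Hilbert polynomial~$\hp$.

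The central step is establishing this strict inequality. My plan is to induct along the tree of saturated Borel ideals with Hilbert polynomial $\hp$ built by the algorithms of \cite{Reeves_1992, Moore_2012, Cioffi_Lella_Marinari_Roggero_2011}, with $L$ as the root. Each algorithmic move removes a distinguished minimal generator $u$ and replaces it with a Borel-closed collection of monomials preserving the Hilbert polynomial; by construction each new monomial has strictly smaller $m + \deg$ than $u$. The hard part will be verifying that the Borel closure phase of the move cannot reintroduce a minimal generator matching the original value of $\rho$. This requires careful monomial-level bookkeeping, which I would organize by using the algorithmic structure to isolate a canonical top generator at each step and propagate bounds on $\rho$ inductively.
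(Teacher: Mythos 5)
Your overall architecture matches the paper's: the formula $\hk$ in terms of minimal generators, the reduction to comparing $\rho(I) := \max_g(\deg g + \max g)$ against the lex ideal, and an induction along the Reeves--Moore generating algorithm. However, there are concrete problems with the supporting claims and, more seriously, the heart of the argument is left unproved.

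First, the claim that ``this sum cannot cancel, so in fact $\deg \hk_I = \rho(I)$'' is false for general saturated Borel ideals. Take $I = \langle x_0^2, x_0x_1, x_0x_2, x_1^3\rangle \subset \kk[x_0,x_1,x_2,x_3]$ (which has Hilbert polynomial $3t+1$): here $\rho(I) = 4$, attained by both $x_0x_2$ (with $m=2$, sign $+1$) and $x_1^3$ (with $m=1$, sign $-1$), so the top coefficient cancels and $\hk_I = 1 - 3T^2 + 2T^3$ has degree $3$. This particular error is survivable, because your reduction only needs $\deg\hk_I \le \rho(I)$ for the non-lex ideal together with equality for $L$ (where the maximum is attained by the unique last generator, so no cancellation occurs); but you should not assert the equality in general.

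Second, your description of the algorithmic moves is wrong on two counts. An expansion at a generator $g$ does not preserve the Hilbert polynomial (it adds $1$ to it), and extension composes $\lift$ with some power of $\plus$; consequently the set of saturated Borel ideals with \emph{fixed} Hilbert polynomial $\hp$ is not a tree rooted at $L$ whose edges are single algorithmic moves. More importantly, it is false that ``each new monomial has strictly smaller $m + \deg$ than $u$'': expansion at $g$ introduces the generators $gx_j$ for $\max g \le j \le n-1$, and $gx_{n-1}$ has $m + \deg = \deg g + n$, which is \emph{strictly larger} than $\deg g + \max g$. The actual mechanism is different: one compares the generating sequence of $I$ with the unique lex sequence, locates the first step where they diverge, observes that this forces an expansion at a generator of non-maximal degree (so every generator of the resulting ideal has degree strictly below the degree of the last lex generator, and $\max \le n-1$), and then shows this two-part deficit persists under all subsequent expansions and extensions, yielding $\rho(I) < \rho(L)$. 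Your proposal explicitly defers exactly this persistence argument (``the hard part will be verifying\dots''), so the central strict inequality --- which is the entire content of the theorem --- is not established.
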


The structure of the paper is as follows.  In
Section~\ref{ch:Macaulaytree}, we introduce two binary relations on
the set of admissible Hilbert polynomials and show that they generate
all such polynomials.  The set of lexicographic ideals is then
partitioned by codimension into infinitely many binary trees in
Section~\ref{ch:lexforest}.  Geometrically, these are trees of Hilbert
schemes, as every Hilbert scheme contains a unique lexicographic
ideal.  To identify well-behaved Hilbert schemes, we review saturated
strongly stable ideals in Section~\ref{ch:stronglystable} and we
examine their $K$-polynomials in Section~\ref{ch:Kpolynomials}.  The
main results are in Section~\ref{ch:Hilbertirred}.

\subsection*{Conventions.}

Throughout, $\kk$ is a field, $\NN$ is the nonnegative integers, and
$\kk[x_0, x_1, \dotsc, x_n]$ is the standard $\ZZ$-graded polynomial
ring.  The Hilbert function, polynomial, series, and
$K$\nobreakdash-polynomial of the quotient $\kk[x_0, x_1, \dotsc, x_n]
/ I$ by a homogeneous ideal $I$ are denoted $\hf_I, \hp_I, \hs_I$, and
$\hk_I$, respectively.

\subsection*{Acknowledgments}

We especially thank Gregory~G.~Smith for his guidance in this
research.  We thank Mike~Roth, Ivan~Dimitrov, Tony~Geramita,
Chris~Dionne, Ilia~Smirnov, Nathan~Grieve, Andrew~Fiori, Simon~Rose,
and Alex~Duncan for many discussions.  We also thank the anonymous
referee for helpful remarks that improved the paper.  This research
was supported by an E.G.~Bauman Fellowship in 2011-12, by Ontario
Graduate Scholarships in 2012-15, and by Gregory~G.~Smith's NSERC
Discovery Grant in 2015-16.

\section{Binary Trees and Hilbert Schemes}

In \ref{ch:Macaulaytree} we observe that a tree structure exists on
the set of numerical polynomials determining nonempty Hilbert schemes.
Macaulay's pioneering work \cite{Macaulay--1927} classifies these
polynomials and two mappings turn this set into an infinite binary
tree.  In \ref{ch:lexforest} we find related binary trees in the sets
of lexicographic ideals and Hilbert schemes.

\subsection{The Macaulay Tree}
\label{ch:Macaulaytree}

Let $\kk$ be a field and let $\kk[x_0, x_1, \dotsc ,x_n]$ denote the
homogeneous (standard $\ZZ$-graded) coordinate ring of $n$-dimensional
projective space $\PP^n_{\kk}$.  Let $M$ be a finitely generated
graded $\kk[x_0, x_1, \dotsc, x_n]$\nobreakdash-module.  The
\emph{\bfseries Hilbert function} $\hf_M \colon \ZZ \to \ZZ$ of $M$ is
defined by $\hf_M(i) := \dim_{\kk} (M_i)$ for all $i \in \ZZ$.  Every
such $M$ has a \emph{\bfseries Hilbert polynomial} $\hp_M$, that is, a
polynomial $\hp_M(t) \in \QQ[t]$ such that $\hf_M(i) = \hp_M(i)$ for
$i \gg 0$; see \cite[Theorem~4.1.3]{Bruns--Herzog--1993}.  For a
homogeneous ideal $I \subset \kk[x_0, x_1, \dotsc, x_n]$, let $\hf_I$
and $\hp_I$ denote the Hilbert function and Hilbert polynomial of the
quotient module $\kk[x_0, x_1, \dotsc, x_n]/I$, respectively.

We begin with a basic example and make a notational convention, for
later use.

\begin{example}
  \label{eg:myfirstpoly} 
  Fix a nonnegative integer $n$.  By the classic stars-and-bars
  argument \cite[Section~1.2]{Stanley--2012}, we have $\hf_{S}(i) =
  \binom{i + n}{n}$ for all $i \in \ZZ$, where $S = \kk[x_0, x_1,
    \dotsc, x_n]$.  The equality $\hf_{S}(i) = \hp_{S}(i)$ is only
  valid for $i \ge -n$, because the polynomial $\hp_{S}(t) = \binom{t
    + n}{n} \in \QQ[t]$ only has roots $-n$, $-n+1, \dotsc, -1$,
  whereas $\hf_{S}(i) = 0$ for all $i < 0$.
\end{example}

\begin{remark}
  \label{rmk:binconv}
  For integers $j,k$ we set $\binom{j}{k} = \frac{j!}{k!(j-k)!}$ if $j
  \ge k \ge 0$ and $\binom{j}{k} = 0$ otherwise.  For a variable $t$
  and $a, b \in \ZZ$, we define $\binom{t + a}{b} = \frac{(t + a)(t +
    a - 1) \dotsb (t + a - b + 1)}{b!} \in \QQ[t]$ if $b \ge 0$, and
  $\binom{t + a}{b} = 0$ otherwise.  When $b \ge 0$, the polynomial
  $\binom{t + a}{b}$ has degree $b$ with zeros $-a, -a+1, \dotsc,
  -a+b-1$.  Importantly, we have $\binom{t + a}{b} \rvert_{t = j} \ne
  \binom{j + a}{b} = 0$ when $j < -a$.  Interestingly,
  \cite[p.533]{Macaulay--1927} uses distinct notation for polynomial
  and integer binomial coefficients.
\end{remark}

A polynomial is an \emph{\bfseries admissible Hilbert polynomial} if
it is the Hilbert polynomial of some closed subscheme in some $\PP^n$.
Admissible Hilbert polynomials correspond to nonempty Hilbert schemes.
We use the well-known classification first discovered by Macaulay.

\begin{proposition} 
  \label{prop:expressions}
  The following conditions are equivalent:
  \begin{enumerate}
  \item The polynomial $\hp(t) \in \QQ[t]$ is a nonzero admissible
    Hilbert polynomial.
    
  \item There exist integers $e_0 \ge e_1 \ge \dotsb \ge e_d > 0$ such
    that $\hp(t) = \sum_{i=0}^{d} \binom{t + i}{i + 1} - \binom{t + i
      - e_i}{i + 1}$.

  \item There exist integers $b_1 \ge b_2 \ge \dotsb \ge b_r \ge 0$
    such that $\hp(t) = \sum_{j=1}^r \binom{t + b_j - j+1}{b_j}$.
  \end{enumerate}
  Moreover, these correspondences are bijective.
\end{proposition}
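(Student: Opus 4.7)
The plan is to establish (i) $\Leftrightarrow$ (iii) using Macaulay's classical classification together with an explicit geometric realization, (ii) $\Leftrightarrow$ (iii) via a combinatorial identity under the conjugate partition bijection, and then to read off bijectivity from the uniqueness built into each expression.

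For (i) $\Leftrightarrow$ (iii), I would invoke Macaulay's theorem \cite{Macaulay_1927}: every admissible Hilbert polynomial can be written uniquely in the Gotzmann form $\sum_{j=1}^r \binom{t + b_j - (j-1)}{b_j}$ with $b_1 \ge \dotsb \ge b_r \ge 0$. The forward direction is extracted by peeling off the leading Macaulay block of $\hp$ inductively, with the monotonicity conditions on $(b_j)$ enforced by Macaulay's bounds on the growth of Hilbert functions. The reverse direction is by explicit construction: either the lexicographic ideal with parameters $(b_j)$ has Hilbert polynomial exactly $\sum_j \binom{t + b_j - (j-1)}{b_j}$, or, equivalently, a disjoint union of linear subspaces of $\PP^n$ of dimensions $b_1, \dotsc, b_r$ in sufficiently general position realizes this polynomial.

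For (ii) $\Leftrightarrow$ (iii), I would pass through the \emph{conjugate partition} bijection
\[
e_i = \bigl|\{\, j : b_j \ge i \,\}\bigr| \qquad (0 \le i \le d := b_1),
\]
with inverse $b_j = \max\{\, i : e_i \ge j \,\}$, which interchanges the two monotonicity conditions $e_0 \ge \dotsb \ge e_d > 0$ and $b_1 \ge \dotsb \ge b_r \ge 0$. To check that the two polynomial expressions agree, I would apply the iterated Pascal identity
\[
\binom{t + i}{i + 1} - \binom{t + i - e_i}{i + 1} = \sum_{m = 1}^{e_i} \binom{t + i - m}{i}
\]
to each summand of (ii), rewriting it as a sum of $\sum_i e_i$ binomials of the form $\binom{t + \cdot}{i}$; a telescoping reorganization along the rows of the conjugate Young diagram then collects these into the $r$ summands of (iii), with the $j$-th consolidated term equal to $\binom{t + b_j - (j-1)}{b_j}$.

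Bijectivity follows from the uniqueness inherent in each expression: in (ii), the leading term of $\hp$ forces $d$ and $e_d$, and peeling off the corresponding block determines each subsequent $e_i$ recursively; uniqueness in (iii) is part of Macaulay's theorem. The main obstacle is the combinatorial bookkeeping for (ii) $\Leftrightarrow$ (iii), namely matching the index shifts in $\binom{t + i - m}{i}$ to those in $\binom{t + b_j - (j-1)}{b_j}$ under the conjugate bijection; this requires careful indexing but uses no idea beyond Pascal's identity.
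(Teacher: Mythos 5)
The paper disposes of this proposition entirely by citation: (i) $\Leftrightarrow$ (ii) to Macaulay and Hartshorne, (i) $\Leftrightarrow$ (iii) to Gotzmann, and uniqueness to the same sources. Your route is genuinely different in that you cite the literature only for the hard implication (that every admissible polynomial admits such an expression) and then prove (ii) $\Leftrightarrow$ (iii) directly by conjugation of partitions. That combinatorial equivalence is correct, and it is in fact the content of the paper's Lemma~\ref{lem:dictionary}, which the paper proves separately by repeatedly splitting off the smallest part $e_d$; your version---expanding each Macaulay block via Pascal into the double sum $\sum_{i=0}^{d}\sum_{m=1}^{e_i}\binom{t+i-m}{i}$ over the cells of the Young diagram of $(e_0,\dotsc,e_d)$ and then resumming column-by-column with the parallel summation identity $\sum_{i=0}^{b_j}\binom{t-j+i}{i}=\binom{t+b_j-(j-1)}{b_j}$---is arguably cleaner and makes the conjugacy (including $r=e_0$) transparent. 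Your uniqueness argument for the $e_i$ (the leading coefficient forces $d$ and $e_d$, then peel off that block and recurse) is also sound and transfers to the $b_j$ through the bijection.

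One concrete error: a disjoint union of linear subspaces of dimensions $b_1,\dotsc,b_r$ in general position does \emph{not} have Hilbert polynomial $\sum_{j=1}^{r}\binom{t+b_j-(j-1)}{b_j}$. For disjoint closed subschemes Hilbert polynomials simply add, giving $\sum_{j}\binom{t+b_j}{b_j}$. Already for $r=2$ and $b_1=b_2=1$, two skew lines have Hilbert polynomial $2t+2$, whereas the Gotzmann expression $\binom{t+1}{1}+\binom{t}{1}=2t+1$ is realized by two \emph{incident} lines. The correct geometric witnesses are suitably nested or incident configurations of linear subspaces (Hartshorne's fans), or, as in your first alternative, the lexicographic ideal, whose Hilbert polynomial is computed independently in Lemma~\ref{lem:lexmingens}(i). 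Since that first alternative is correct and suffices for (iii) $\Rightarrow$ (i), the slip is local, but the word ``equivalently'' is wrong and the disjoint-union claim must be removed.
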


\begin{proof} $\;$
  \begin{enumerate}[leftmargin=2cm]
  \item[(i) $\Leftrightarrow$ (ii)] This is proved in
    \cite{Macaulay--1927}; see the formula for ``$\chi(\ell)$'' at the
    bottom of p.536.  For a geometric account, see \cite[Corollary~3.3
      and Corollary~5.7]{Hartshorne--1966}.

  \item[(i) $\Leftrightarrow$ (iii)] This follows from
    \cite[Erinnerung~2.4]{Gotzmann--1978}; see also
    \cite[Exercise~4.2.17]{Bruns--Herzog--1993}.
  \end{enumerate}
  Uniqueness of the sequences of integers also follows.
\end{proof}

For simplicity, we always work with nonzero admissible Hilbert
polynomials.  Let the \emph{\bfseries Macaulay--Hartshorne expression}
of an admissible Hilbert polynomial $\hp$ be its expression $\hp(t) =
\sum_{i=0}^{d} \binom{t + i}{i + 1} - \binom{t + i - e_i}{i + 1}$, for
$e_0 \ge e_1 \ge \dotsb \ge e_d > 0$, and the \emph{\bfseries Gotzmann
  expression} of $\hp$ be its expression $\hp(t) = \sum_{j=1}^{r}
\binom{t + b_j - j+1}{b_j}$, for $b_1 \ge b_2 \ge \dotsb \ge b_r \ge
0$.  From these, we find the degree $d = b_1$, the leading coefficient
$e_d / d!$, and the \emph{\bfseries Gotzmann number} $r$ of $\hp$,
which bounds the Castelnuovo--Mumford regularity of saturated ideals
with Hilbert polynomial $\hp$.  In particular, such ideals are
generated in degree $r$; see
\cite[p.~300-301]{Iarrobino--Kanev--1999}.

Macaulay--Hartshorne and Gotzmann expressions are conjugate.  Recall
that the \emph{\bfseries conjugate} partition to a partition
$\lambda = \left( \lambda_1, \lambda_2, \dotsc, \lambda_k \right)$ of
an integer $\ell = \sum_{i=1}^k \lambda_i$ is the partition of $\ell$
obtained from the Ferrers diagram of $\lambda$ by interchanging rows
and columns, having $\lambda_i - \lambda_{i+1}$ parts equal to $i$;
see \cite[Section~1.8]{Stanley--2012}.

\begin{lemma}
  \label{lem:dictionary}
  If $\hp(t) \in \QQ[t]$ is an admissible Hilbert polynomial with
  Macaulay--Hartshorne expression
  $\sum_{i=0}^d \binom{t+i}{i+1} - \binom{t+i -e_i}{i+1}$ for
  $e_0 \ge e_1 \ge \dotsb \ge e_d > 0$ and Gotzmann expression
  $\sum_{j=1}^r \binom{t + b_j - j+1}{b_j}$ for
  $b_1 \ge b_2 \ge \dotsb \ge b_r \ge 0$, then $r = e_0$ and the
  nonnegative partition $( b_1, b_2, \dotsc, b_r )$ is conjugate to
  the partition $( e_1, e_2, \dotsc, e_d )$.
\end{lemma}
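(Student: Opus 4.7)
The plan is to expand the Macaulay--Hartshorne sum into a double sum of binomial coefficients, swap the order of summation using the conjugation relationship, collapse the inner sum with the hockey-stick identity to recover the Gotzmann expression, and then appeal to the uniqueness part of Proposition~\ref{prop:expressions}.

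Starting from the Macaulay--Hartshorne data $e_0 \ge e_1 \ge \dotsb \ge e_d > 0$, I would set $r := e_0$ and let $(b_1, \dotsc, b_r)$ be the partition conjugate to $(e_1, \dotsc, e_d)$, padded with $r - e_1$ trailing zeros so as to have length $r$. The iterated Pascal identity ${t+a \choose b} - {t+a-1 \choose b} = {t+a-1 \choose b-1}$ telescopes to
\[
{t+i \choose i+1} - {t+i-e_i \choose i+1} \; = \; \sum_{m=1}^{e_i} {t+i-m \choose i},
\]
so the Macaulay--Hartshorne expression for $\hp$ becomes the double sum $\sum_{i=0}^d \sum_{m=1}^{e_i} {t+i-m \choose i}$.

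Next I would exchange the order of summation. The defining property of conjugate partitions, combined with the convention $e_0 := r$, yields the equivalence $e_i \ge m \iff b_m \ge i$ for $0 \le i \le d$ and $1 \le m \le r$. Swapping the order of summation produces $\sum_{m=1}^r \sum_{i=0}^{b_m} {t+i-m \choose i}$, and a single application of the hockey-stick identity $\sum_{i=0}^N {x+i \choose i} = {x+N+1 \choose N}$ with $x = t-m$ and $N = b_m$ collapses the inner sum to ${t+b_m-(m-1) \choose b_m}$. This gives $\hp(t) = \sum_{m=1}^r {t+b_m-(m-1) \choose b_m}$, which is exactly the Gotzmann expression attached to $(b_1, \dotsc, b_r)$. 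By the uniqueness statement of Proposition~\ref{prop:expressions}, this must be the Gotzmann sequence of $\hp$, so $r = e_0$ and the two partitions are conjugate, as claimed.

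The only real care point, rather than a genuine obstacle, is the boundary bookkeeping. One must justify the extension of the conjugation equivalence $e_i \ge m \iff b_m \ge i$ down to $i = 0$ (which is precisely the role of defining $e_0 = r$), and one must handle trailing zeros in $(b_1, \dotsc, b_r)$ when $e_1 < r$. The latter is purely clerical, since a Gotzmann term with $b_m = 0$ contributes $1$, matching the one-term inner sum $\sum_{i=0}^0 {t-m \choose 0} = 1$. Both points become transparent once the Ferrers-diagram picture of the conjugation is kept in mind.
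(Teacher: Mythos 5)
Your proof is correct, but it is organized differently from the paper's. The paper proceeds by induction on $d$: it peels off the smallest part $e_d$, proves $\sum_{i=0}^{d} {t + i \choose i + 1} - {t + i - e_d \choose i + 1} = \sum_{j=1}^{e_d} {t + d - (j-1) \choose d}$ inductively via the addition formula, and then recurses on the remainder after the substitution $s := t - e_d$, so the Gotzmann partition is assembled one block of equal parts at a time. You instead telescope each Macaulay--Hartshorne difference into $\sum_{m=1}^{e_i} {t+i-m \choose i}$, interchange the resulting double sum using the Ferrers-diagram equivalence $e_i \ge m \iff b_m \ge i$ (with the crucial convention $e_0 = r$ handling the row $i=0$), and collapse with the hockey-stick identity; uniqueness in Proposition~\ref{prop:expressions} then identifies the result as the Gotzmann expression. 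Your route avoids both the induction and the variable shifts, and it makes the conjugation visible as a literal transposition of the summation index set, which is arguably more transparent; the paper's recursive version has the mild advantage of exhibiting the intermediate identity for constant blocks of $e_i$'s, which it reuses implicitly when describing $\lift$ and $\plus$. All the boundary issues you flag (the role of $e_0 = r$ at $i = 0$, the $r - e_1$ trailing zeros, and $b_m \le d$ so the inner sum stays within range) check out, so there is no gap.
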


\begin{proof}
  The key step is to rewrite $\hp$ as $\sum_{i=0}^{d} \binom{t + i}{i
    + 1} - \binom{t + i - e_d}{i + 1} + \sum_{i=0}^{d-1} \binom{t + i
    - e_d}{i + 1} - \binom{t + i - e_i}{i + 1}$ and to prove
  $\sum_{i=0}^{d} \binom{t + i}{i + 1} - \binom{t + i - e_d}{i + 1} =
  \sum_{j=1}^{e_d} \binom{t+d-j+1}{d}$, by induction on $d$.  This
  gives the expression
  \[
  \hp(t) = \sum_{j=1}^{e_d} \binom{t + d - j+1}{d} + \left[
    \sum_{i=0}^{d-1} \binom{s + i}{i + 1} - \binom{s + i - e_i+e_d}{i
      + 1} \right]_{s = t - e_d}
  \]
  and one can iterate on the second part.  So $r = e_0$ and the
  partition $(b_1, b_2, \ldots, b_r)$ has $e_i - e_{i+1}$ parts equal
  to $i$, for all $0 \le i \le d$.  The equalities $\sum_{j=1}^r b_j =
  \sum_{i=0}^d (e_i - e_{i+1})i = \sum_{i=1}^d e_i$ then show that $(
  b_1, b_2, \dotsc, b_r )$ is conjugate to $( e_1, e_2, \dotsc, e_d
  )$.
\end{proof}

For $\hp(t)$ as in Lemma~\ref{lem:dictionary}, it is convenient to
refer to $(e_0, e_1, \dotsc, e_d)$ as its \emph{\bfseries
  Macaulay--Hartshorne partition} and $(b_1, b_2, \dotsc, b_r)$ as its
(nonnegative) \emph{\bfseries Gotzmann partition}.

We now describe two fundamental binary relations on admissible Hilbert
polynomials.  The first takes the polynomial $\hp$ with partitions $e
= (e_0, e_1, \dotsc, e_d)$ and $b = (b_1, b_2, \dotsc, b_r)$ to the
polynomial $\lift (\hp)$ with partitions $(e_0, e)$ and $b + (1, 1,
\ldots, 1)$ (add one to each entry).  The second takes $\hp$ to $\plus
(\hp) = 1 + \hp$, with partitions $e + (1, 0, \ldots, 0)$ and $(b,
0)$.  Both $\lift (\hp)$ and $\plus (\hp)$ are admissible by
Proposition~\ref{prop:expressions}.

The \emph{\bfseries backwards difference operator} $\nabla$ maps any
$\hq \in \QQ[t]$ to $\hq(t) - \hq(t-1)$.  Backwards differences are
discrete derivatives---in Lemma~\ref{lem:liftproperties}, (ii) says
that $\lift$ is the indefinite integral and (iii) is a well-known
discrete analogue of the Fundamental Theorem of Calculus.

\begin{lemma} 
  \label{lem:liftproperties} 
  If $\hp(t)$ is an admissible Hilbert polynomial with
  Macaulay--Hartshorne partition $(e_0, e_1, \dotsc, e_d)$ and
  Gotzmann partition $( b_1, b_2, \dotsc, b_r )$, then the following
  hold:
  \begin{enumerate}
  \item $\left[ \nabla (\hp) \right](t) = \sum_{j=1}^{r} \binom{t +
    b_j-1 - j+1}{b_j-1} = \sum_{i=0}^{d-1} \binom{t + i}{i + 1} -
    \binom{t + i - e_{i+1}}{i + 1}$;

  \item $\nabla \plus^a \lift (\hp) = \hp$, for all $a \in \NN$; and

  \item if $\deg \hp > 0$ and $k \in \{ 1, 2, \dotsc, r \}$ is the
    largest index such that $b_k \neq 0$, then we have
    $\hp - \lift \nabla (\hp) = r - k$, but if $\deg \hp = 0$, then
    $\nabla (\hp) = 0$.
\end{enumerate}
\end{lemma}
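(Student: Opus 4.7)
My plan is to prove all four parts by direct computation, using the Gotzmann and Macaulay--Hartshorne expressions together with the elementary binomial identity $\binom{t+a}{b} - \binom{t+a-1}{b} = \binom{t+a-1}{b-1}$. Because $\plus$ and $\lift$ have been described at the level of partitions (and hence at the level of the Gotzmann sum), and $\nabla$ is a $\QQ$-linear operator, the key is to track exactly how each operator acts termwise on the binomial summands.

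For part (i), I would apply $\nabla$ termwise to each summand of the Gotzmann expression. The identity above gives $\nabla\binom{t+b_j-(j-1)}{b_j} = \binom{t+(b_j-1)-(j-1)}{b_j-1}$, yielding the first displayed formula immediately (with the convention that terms having $b_j = 0$ vanish since $\binom{\cdot}{-1} = 0$). For the Macaulay--Hartshorne form, I would apply $\nabla$ termwise to the alternating sum, observe that the $i=0$ summand is $\binom{t-1}{0} - \binom{t-1-e_0}{0} = 0$ as polynomials, and then reindex $i \mapsto i+1$ to produce the second expression.

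Part (ii) follows by first noting that $\lift(\hp)$ has Gotzmann partition $(b_1+1, \dotsc, b_r+1)$, and each application of $\plus$ appends a zero, so $\plus^a \lift(\hp)$ has Gotzmann partition $(b_1+1, \dotsc, b_r+1, 0, \dotsc, 0)$ with $a$ trailing zeros. Part (i) then collapses the zero-entries ($b_j-1 = -1$ gives a vanishing binomial) and reduces the nonzero entries back to $\binom{t+b_j-(j-1)}{b_j}$, recovering $\hp$. For part (iii), when $\deg \hp > 0$, I would let $k$ be the last index with $b_k \ne 0$, apply part (i) to see that $\nabla(\hp)$ has Gotzmann partition $(b_1-1, \dotsc, b_k-1)$, then apply $\lift$ to get Gotzmann partition $(b_1, \dotsc, b_k)$; subtracting from $\hp$ leaves the $r-k$ zero summands $\binom{t-(j-1)}{0} = 1$. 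The degenerate case $\deg \hp = 0$ forces all $b_j = 0$, making $\nabla(\hp) = 0$ directly from part (i).

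Part (iv) is a bookkeeping comparison of the Gotzmann partitions of $\lift\plus(\hp)$ and $\plus\lift(\hp)$: the former is $(b_1+1, \dotsc, b_r+1, 1)$ while the latter is $(b_1+1, \dotsc, b_r+1, 0)$. The first $r$ summands in each expression are identical, so the difference is $\binom{t+1-r}{1} - \binom{t-r}{0} = (t+1-r) - 1 = t-r$. I expect no serious obstacle here; the only subtle points are the careful use of the polynomial convention for binomial coefficients (so that $\binom{\cdot}{0} = 1$ and $\binom{\cdot}{-1} = 0$ as polynomials, independent of the numerator) and the verification in part (iii) that the reduced partition $(b_1-1, \dotsc, b_k-1)$ is still weakly decreasing and nonnegative, which is immediate from $b_k \ge 1$ and monotonicity of the original partition.
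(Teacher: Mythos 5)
Your proposal is correct and follows essentially the same route as the paper: apply $\nabla$ termwise via the identity $\binom{t+a}{b}-\binom{t+a-1}{b}=\binom{t+a-1}{b-1}$, and track how $\lift$ and $\plus$ act on the Gotzmann partition to compare the resulting sums (the paper handles $\plus^a$ in part (ii) by simply noting it adds the constant $a$, which $\nabla$ kills — equivalent to your trailing-zeros bookkeeping). Your explicit checks of the degenerate $i=0$ summand in the Macaulay--Hartshorne form and of the weak decrease of $(b_1-1,\dotsc,b_k-1)$ are fine and consistent with the paper's conventions.
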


\begin{proof}
These follow by linearity of $\nabla$ and the binomial addition
formula.
\end{proof}

We now observe that the set of admissible Hilbert polynomials forms a
tree.

\begin{proposition}
  \label{prop:Macaulaytree}
  The tree with vertices corresponding to admissible Hilbert
  polynomials and edges corresponding to pairs of the form $\bigl(
  \hp, \plus (\hp) \bigr)$ and $\bigl( \hp, \lift (\hp) \bigr)$, for
  all admissible Hilbert polynomials $\hp$, forms an infinite full
  binary tree.  The root of the tree corresponds to $1$.
\end{proposition}

We call this the \emph{\bfseries Macaulay tree} $\Macaulaytree$.  It
has $2^j$ vertices at height $j$, for all $j \in \NN$.

\begin{proof}
  By induction on $r$, $\hp(t) = \sum_{j=1}^r \binom{t + b_j -
    j+1}{b_j} = \lift^{b_r} \plus \lift^{b_{r-1} - b_r} \plus \dotsb
  \plus \lift^{b_2 - b_3} \plus \lift^{b_1 - b_2} (1)$ holds.
\end{proof}

A portion of $\Macaulaytree$ is displayed in
Figure~\ref{fig:Macaulaytree}, in terms of Gotzmann expressions.

\begin{figure}[!ht] 
  \centering
  \resizebox{\linewidth}{!}{
    \begin{tikzpicture}[ 
      grow = right, 
      level distance = 250pt, 
      sibling distance = 25pt ]
      \tikzset{ 
        edge from parent/.style={ 
          draw, 
          very thick, 
        }, 
        every tree node/.style={ 
          draw,
          very thick,
          shape = ellipse,
          fill = yellow!25,
          font = \LARGE,
          text width=150pt, 
          align = center } } 
      \Tree 
      [.$\binom{t}{0}$ 
        \edge node[auto=right] {\huge$\lift$};[.$\binom{t + 1}{1}$ 
          [.$\binom{t + 2}{2}$
            [.$\binom{t + 3}{3}$
              [.$\binom{t + 4}{4}$
                \edge[dashed]; \node[text width=0pt, draw=none, fill=none]{}; 
                \edge[dashed]; \node[text width=0pt, draw=none, fill=none]{}; ]
              [.{$\binom{t + 3}{3} + \binom{t - 1}{0}$}
                \edge[dashed]; \node[text width=0pt, draw=none, fill=none]{}; 
                \edge[dashed]; \node[text width=0pt, draw=none, fill=none]{}; ] ] 
            [.{$\binom{t + 2}{2} + \binom{t - 1}{0}$}
              [.{$\binom{t + 3}{3} + \binom{t}{1}$}
                \edge[dashed]; \node[text width=0pt, draw=none, fill=none]{}; 
                \edge[dashed]; \node[text width=0pt, draw=none, fill=none]{}; ] 
              [.{$\binom{t + 2}{2} + \binom{t - 1}{0} + \binom{t - 2}{0}$}
                \edge[dashed]; \node[text width=0pt, draw=none, fill=none]{}; 
                \edge[dashed]; \node[text width=0pt, draw=none, fill=none]{}; ] ] ] 
          [.{$\binom{t + 1}{1} + \binom{t - 1}{0}$}
            [.{$\binom{t + 2}{2} + \binom{t}{1}$}
              [.{$\binom{t + 3}{3} + \binom{t + 1}{2}$}
                \edge[dashed]; \node[text width=0pt, draw=none, fill=none]{}; 
                \edge[dashed]; \node[text width=0pt, draw=none, fill=none]{}; ] 
              [.{$\binom{t + 2}{2} + \binom{t}{1} + \binom{t - 2}{0}$}
                \edge[dashed]; \node[text width=0pt, draw=none, fill=none]{}; 
                \edge[dashed]; \node[text width=0pt, draw=none, fill=none]{}; ] ]
            [.{$\binom{t + 1}{1} + \binom{t - 1}{0} + \binom{t - 2}{0}$}
              [.{$\binom{t + 2}{2} + \binom{t}{1} + \binom{t - 1}{1}$} 
                \edge[dashed]; \node[text width=0pt, draw=none, fill=none]{}; 
                \edge[dashed]; \node[text width=0pt, draw=none, fill=none]{}; ] 
              [.{$\binom{t + 1}{1} + \binom{t - 1}{0} + \binom{t -
                    2}{0} + \binom{t - 3}{0}$} 
                \edge[dashed]; \node[text width=0pt, draw=none, fill=none]{}; 
                \edge[dashed]; \node[text width=0pt, draw=none, fill=none]{}; ] ] ] ] 
        \edge node[auto=left] {\huge$\plus$};[.{$\binom{t}{0} + \binom{t - 1}{0}$} 
          [.{$\binom{t + 1}{1} + \binom{t}{1}$}
            [.{$\binom{t + 2}{2} + \binom{t + 1}{2}$} 
              [.{$\binom{t + 3}{3} + \binom{t + 2}{3}$}
                \edge[dashed]; \node[text width=0pt, draw=none, fill=none]{}; 
                \edge[dashed]; \node[text width=0pt, draw=none, fill=none]{}; ] 
              [.{$\binom{t + 2}{2} + \binom{t + 1}{2} + \binom{t - 2}{0}$}
                \edge[dashed]; \node[text width=0pt, draw=none, fill=none]{}; 
                \edge[dashed]; \node[text width=0pt, draw=none, fill=none]{}; ] ] 
            [.{$\binom{t + 1}{1} + \binom{t}{1} + \binom{t - 2}{0}$}
              [.{$\binom{t + 2}{2} + \binom{t + 1}{2} + \binom{t - 1}{1}$} 
                \edge[dashed]; \node[text width=0pt, draw=none, fill=none]{}; 
                \edge[dashed]; \node[text width=0pt, draw=none, fill=none]{}; ]
              [.{$\binom{t + 1}{1} + \binom{t}{1} + \binom{t - 2}{0} +
                  \binom{t - 3}{0}$} 
                \edge[dashed]; \node[text width=0pt, draw=none, fill=none]{}; 
                \edge[dashed]; \node[text width=0pt, draw=none, fill=none]{}; ] ] ] 
          [.{$\binom{t}{0} + \binom{t - 1}{0} + \binom{t - 2}{0}$} 
            [.{$\binom{t + 1}{1} + \binom{t}{1} + \binom{t - 1}{1}$} 
              [.{$\binom{t + 2}{2} + \binom{t + 1}{2} + \binom{t}{2}$} 
                \edge[dashed]; \node[text width=0pt, draw=none, fill=none]{}; 
                \edge[dashed]; \node[text width=0pt, draw=none, fill=none]{}; ] 
              [.{$\binom{t + 1}{1} + \binom{t}{1} + \binom{t - 1}{1} +
                  \binom{t - 3}{0}$} 
                \edge[dashed]; \node[text width=0pt, draw=none, fill=none]{}; 
                \edge[dashed]; \node[text width=0pt, draw=none, fill=none]{}; ] ] 
            [.{$\binom{t}{0} + \binom{t - 1}{0} + \binom{t - 2}{0} + \binom{t - 3}{0}$} 
              [.{$\binom{t + 1}{1} + \binom{t}{1} + \binom{t - 1}{1} +
                  \binom{t - 2}{1}$} 
                \edge[dashed]; \node[text width=0pt, draw=none, fill=none]{}; 
                \edge[dashed]; \node[text width=0pt, draw=none, fill=none]{}; ] 
              [.{$\binom{t}{0} + \binom{t - 1}{0} + \binom{t - 2}{0} +
                  \binom{t - 3}{0} + \binom{t - 4}{0}$} 
                \edge[dashed]; \node[text width=0pt, draw=none, fill=none]{}; 
                \edge[dashed]; \node[text width=0pt, draw=none, fill=none]{}; ] ] ] ] ]
    \end{tikzpicture}
  }
  \caption{The Macaulay tree $\Macaulaytree$ to height $4$ with
    Gotzmann expressions \label{fig:Macaulaytree}}
\end{figure}
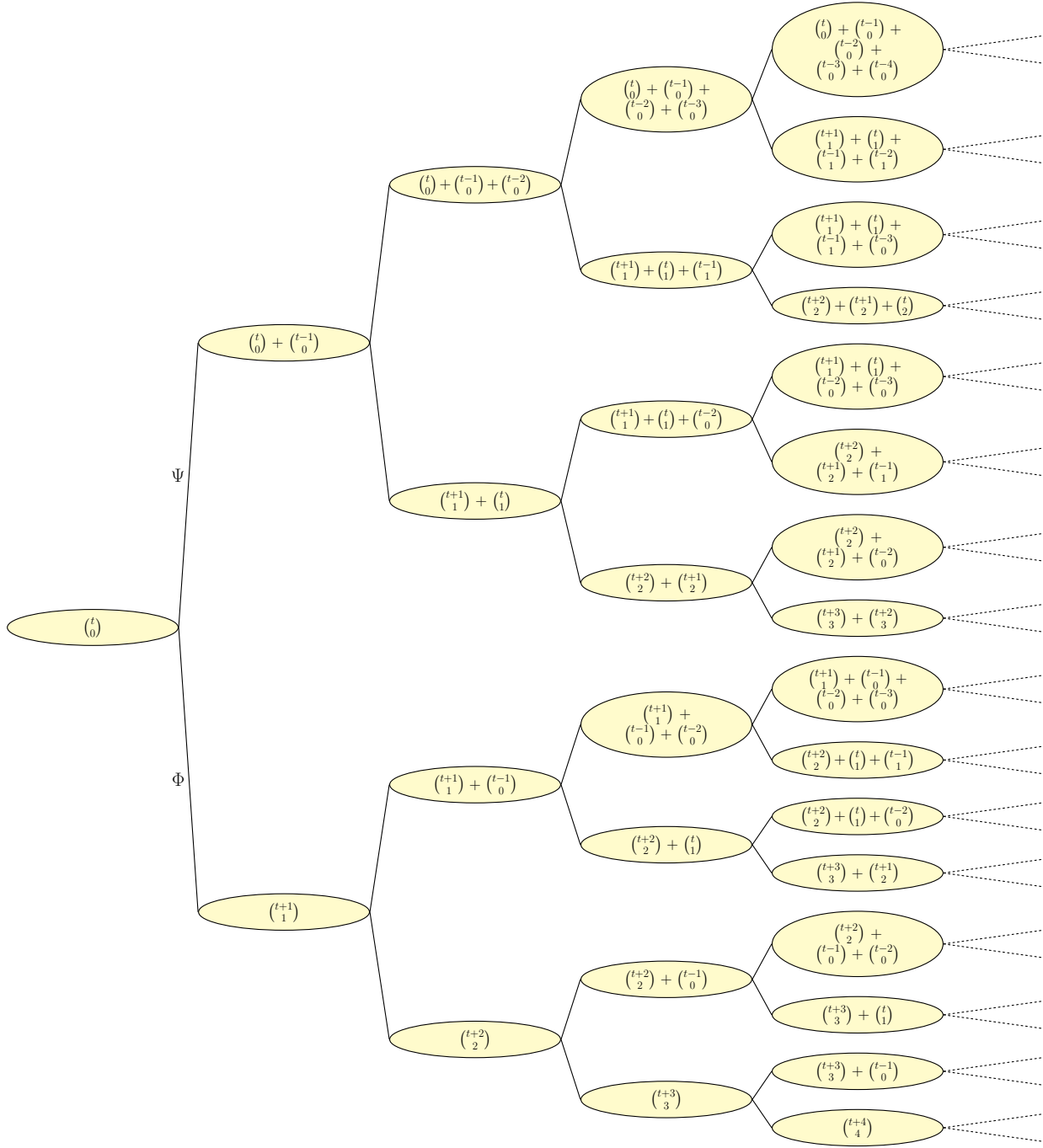

\begin{remark}
  \label{rmk:pathexpressions} 
  The path from the root $1$ of the tree $\Macaulaytree$ to $\hp(t) =
  \sum_{i=0}^d \binom{t + i}{i+1} - \binom{t + i - e_i}{i+1}$ can also
  be expressed as $\hp = \plus^{e_0 - e_1} \lift \plus^{e_1 - e_2}
  \lift \dotsb \lift \plus^{e_{d-1} - e_d} \lift \plus^{e_d - 1} (1)$.
\end{remark}

\begin{example}
  The Hilbert polynomial $3t+1$ of the twisted cubic curve $X \subset
  \PP^3$ has partitions $(b_1, b_2, b_3, b_4) = (1,1,1,0)$ and $(e_0,
  e_1) = (4,3)$.  The path in $\Macaulaytree$ from $1$ to $3t+1$ can
  be written as $\lift^0 \plus \lift^{1-0} \plus \lift^{1-1} \plus
  \lift^{1-1} (1) = \plus^{4-3} \lift \plus^{3-1}(1) = \plus \lift
  \plus^{2}(1)$.  This path is shown in Figure~\ref{fig:twistedpath}.
\end{example}

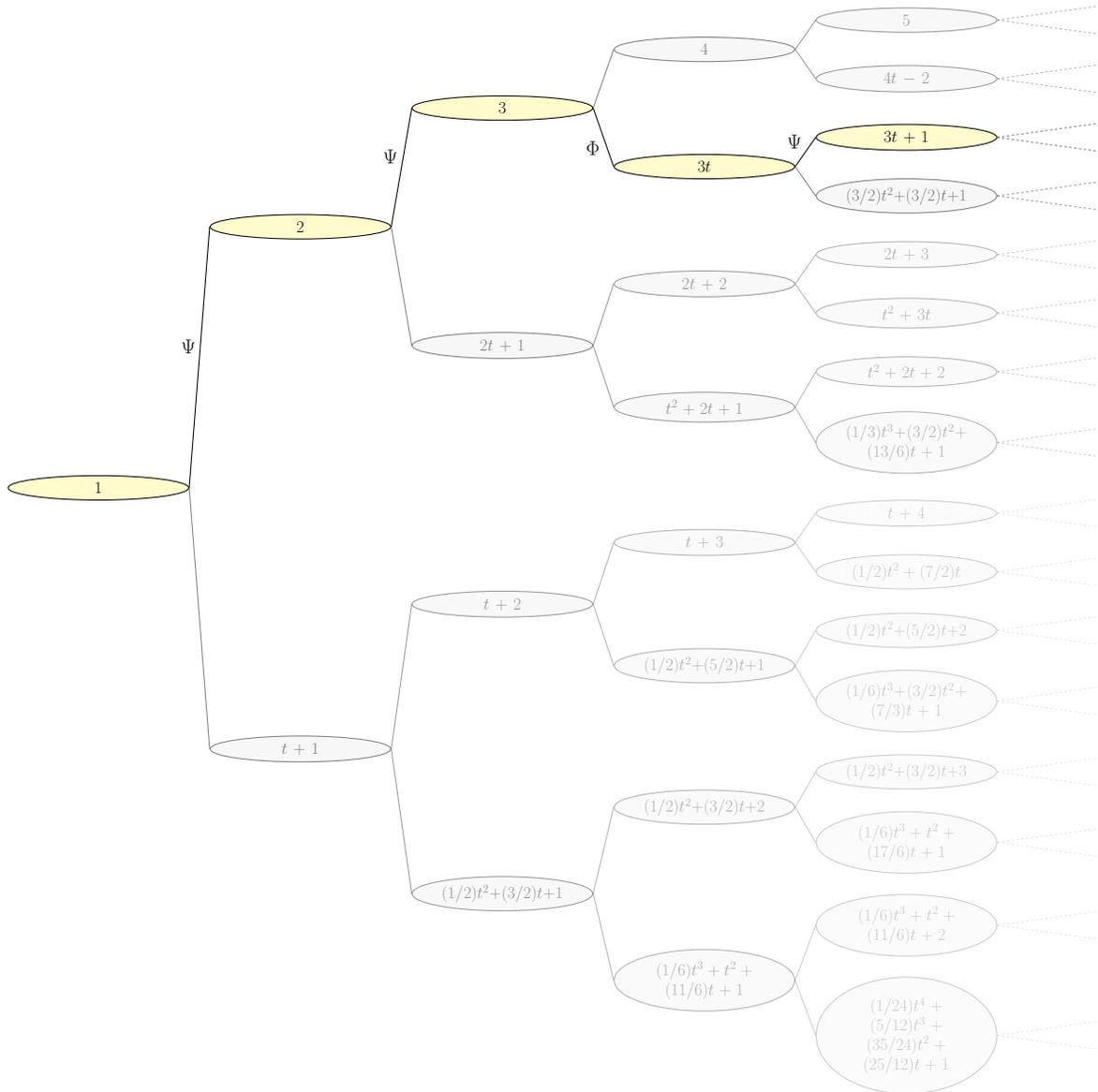
\begin{figure}[!ht] 
  \centering
  \resizebox{.95\linewidth}{!}{
    \begin{tikzpicture}[ 
      grow = right, 
      level distance = 250pt, 
      sibling distance = 25pt ]
      \tikzset{ 
        edge from parent/.style={ 
          draw, 
          very thick, 
          transparent }, 
        every tree node/.style={ 
          draw,
          very thick,
          shape = ellipse,
          fill = lightgray!25,
          font = \LARGE,
          text width=150pt, 
          align = center,
          transparent },
        transparent/.style={
          opacity=0.5,
          text opacity=0.5} } 
      \Tree 
      [.\node[opacity=1, text opacity=1, fill=yellow!25]{$1$}; 
        [.$t+1$ 
          \edge[opacity=0.4];[.\node[opacity=0.4]{$(1/2)t^{2}+(3/2)t+1$}; 
            \edge[opacity=0.3];[.\node[opacity=0.3]{$(1/6)t^{3}+t^{2}+(11/6)t+1$}; 
              \edge[opacity=0.2];[.\node[opacity=0.2]{$(1/24)t^{4}+(5/12)t^{3}+(35/24)t^{2}+(25/12)t+1$}; 
                \edge[opacity=0.1,dashed]; \node[text width=0pt, draw=none, fill=none]{}; 
                \edge[opacity=0.1,dashed]; \node[text width=0pt, draw=none, fill=none]{}; ]
              \edge[opacity=0.2];[.\node[opacity=0.2]{$(1/6)t^{3}+t^{2}+(11/6)t+2$}; 
                \edge[opacity=0.1,dashed]; \node[text width=0pt, draw=none, fill=none]{}; 
                \edge[opacity=0.1,dashed]; \node[text width=0pt, draw=none, fill=none]{}; ] ] 
            \edge[opacity=0.3];[.\node[opacity=0.3]{$(1/2)t^{2}+(3/2)t+2$}; 
              \edge[opacity=0.2];[.\node[opacity=0.2]{$(1/6)t^{3}+t^{2}+(17/6)t+1$}; 
                \edge[opacity=0.1,dashed]; \node[text width=0pt, draw=none, fill=none]{}; 
                \edge[opacity=0.1,dashed]; \node[text width=0pt, draw=none, fill=none]{}; ] 
              \edge[opacity=0.2];[.\node[opacity=0.2]{$(1/2)t^{2}+(3/2)t+3$};
                \edge[opacity=0.1,dashed]; \node[text width=0pt, draw=none, fill=none]{}; 
                \edge[opacity=0.1,dashed]; \node[text width=0pt, draw=none, fill=none]{}; ] ] ] 
          \edge[opacity=0.4];[.\node[opacity=0.4]{$t+2$};
            \edge[opacity=0.3];[.\node[opacity=0.3]{$(1/2)t^{2}+(5/2)t+1$}; 
              \edge[opacity=0.2];[.\node[opacity=0.2]{$(1/6)t^{3}+(3/2)t^{2}+(7/3)t+1$}; 
                \edge[opacity=0.1,dashed]; \node[text width=0pt, draw=none, fill=none]{}; 
                \edge[opacity=0.1,dashed]; \node[text width=0pt, draw=none, fill=none]{}; ] 
              \edge[opacity=0.2];[.\node[opacity=0.2]{$(1/2)t^{2}+(5/2)t+2$};
                \edge[opacity=0.1,dashed]; \node[text width=0pt, draw=none, fill=none]{}; 
                \edge[opacity=0.1,dashed]; \node[text width=0pt, draw=none, fill=none]{}; ] ]
            \edge[opacity=0.3];[.\node[opacity=0.3]{$t+3$}; 
              \edge[opacity=0.2];[.\node[opacity=0.2]{$(1/2)t^{2}+(7/2)t$}; 
                \edge[opacity=0.1,dashed]; \node[text width=0pt, draw=none, fill=none]{}; 
                \edge[opacity=0.1,dashed]; \node[text width=0pt, draw=none, fill=none]{}; ] 
              \edge[opacity=0.2];[.\node[opacity=0.2]{$t+4$}; 
                \edge[opacity=0.1,dashed]; \node[text width=0pt, draw=none, fill=none]{}; 
                \edge[opacity=0.1,dashed]; \node[text width=0pt, draw=none, fill=none]{}; ] ] ] ] 
        \edge[opacity=1] node[text opacity=1, auto=left] {\huge$\plus$};[.\node[opacity=1, text opacity=1, fill=yellow!25]{$2$}; 
          [.$2t+1$ 
            \edge[opacity=0.4];[.\node[opacity=0.4]{$t^{2}+2t+1$}; 
              \edge[opacity=0.3];[.\node[opacity=0.3]{$(1/3)t^{3}+(3/2)t^{2}+(13/6)t+1$}; 
                \edge[opacity=0.2,dashed]; \node[text width=0pt, draw=none, fill=none]{}; 
                \edge[opacity=0.2,dashed]; \node[text width=0pt, draw=none, fill=none]{}; ] 
              \edge[opacity=0.3];[.\node[opacity=0.3]{$t^{2}+2t+2$}; 
                \edge[opacity=0.2,dashed]; \node[text width=0pt, draw=none, fill=none]{}; 
                \edge[opacity=0.2,dashed]; \node[text width=0pt, draw=none, fill=none]{}; ] ] 
            \edge[opacity=0.4];[.\node[opacity=0.4]{$2t+2$}; 
              \edge[opacity=0.3];[.\node[opacity=0.3]{$t^{2}+3t$}; 
                \edge[opacity=0.2,dashed]; \node[text width=0pt, draw=none, fill=none]{}; 
                \edge[opacity=0.2,dashed]; \node[text width=0pt, draw=none, fill=none]{}; ]
              \edge[opacity=0.3];[.\node[opacity=0.3]{$2t+3$}; 
                \edge[opacity=0.2,dashed]; \node[text width=0pt, draw=none, fill=none]{}; 
                \edge[opacity=0.2,dashed]; \node[text width=0pt, draw=none, fill=none]{}; ] ] ] 
          \edge[opacity=1] node[text opacity=1, auto=left] {\huge$\plus$};[.\node[opacity=1, text opacity=1, fill=yellow!25]{$3$}; 
            \edge[opacity=1] node[text opacity=1, auto=right] {\huge$\lift$};[.\node[opacity=1, text opacity=1, fill=yellow!25]{$3t$}; 
              [.$(3/2)t^{2}+(3/2)t+1$ 
                \edge[opacity=0.4,dashed]; \node[text width=0pt, draw=none, fill=none]{}; 
                \edge[opacity=0.4,dashed]; \node[text width=0pt, draw=none, fill=none]{}; ] 
              \edge[opacity=1] node[text opacity=1, auto=left] {\huge$\plus$};[.\node[opacity=1, text opacity=1, fill=yellow!25]{$3t+1$}; 
                \edge[dashed]; \node[text width=0pt, draw=none, fill=none]{}; 
                \edge[dashed]; \node[text width=0pt, draw=none, fill=none]{}; ] ] 
            [.$4$ 
              \edge[opacity=0.4];[.\node[opacity=0.4]{$4t-2$}; 
                \edge[opacity=0.3,dashed]; \node[text width=0pt, draw=none, fill=none]{}; 
                \edge[opacity=0.3,dashed]; \node[text width=0pt, draw=none, fill=none]{}; ] 
              \edge[opacity=0.4];[.\node[opacity=0.4]{$5$}; 
                \edge[opacity=0.3,dashed]; \node[text width=0pt, draw=none, fill=none]{}; 
                \edge[opacity=0.3,dashed]; \node[text width=0pt, draw=none, fill=none]{}; ] ] ] ] ]
    \end{tikzpicture}
  }
  \caption{The path from $1$ to $\hp(t) = 3t+1$ in the Macaulay
    tree \label{fig:twistedpath}}
\end{figure}

\subsection{Lexicographic and Hilbert Trees}
\label{ch:lexforest}

We now connect lexicographic ideals and Hilbert schemes to the
Macaulay tree $\Macaulaytree$.  Specifically, $\Macaulaytree$
reappears infinitely many times in the set of saturated lexicographic
ideals and the set of Hilbert schemes, with one tree for each positive
codimension.  Two mappings on lexicographic ideals analogous to
$\lift$ and $\plus$ are essential.

For any vector $u = (u_0, u_1, \dotsc, u_n) \in \NN^{n+1}$, let $x^u =
x_0^{u_0} x_1^{u_1} \dotsb x_n^{u_n}$.  The \emph{\bfseries
  lexicographic ordering} is the relation $\lexg$ on the monomials in
$\kk[x_0, x_1, \dotsc , x_n]$ defined by $x^u \lexg x^v$ if the first
nonzero coordinate of $u - v \in \ZZ^{n+1}$ is positive, where $u, v
\in \NN^{n+1}$.

\begin{example}
  \label{eg:lexordering}
  We have $x_0 \lexg x_1 \lexg \dotsb \lexg x_n$ in lexicographic
  order on $\kk[x_0, x_1, \dotsc, x_n]$.  If $n \ge 2$, then $x_0
  x_2^2 \lexg x_1^4 \lexg x_1^3$.
\end{example}

Lexicographic ideals are monomial ideals whose homogeneous pieces are
spanned by maximal monomials in lexicographic order.  For a
homogeneous ideal $I$ in $\kk[x_0, x_1, \dotsc, x_n]$, lexicographic
order gives rise to two monomial ideals associated to $I$.  First, the
\emph{\bfseries lexicographic ideal} for the Hilbert function $\hf_I$
in $\kk[x_0, x_1, \dotsc, x_n]$ is the monomial ideal $L^{\hf_I}_n$
whose $i$th graded piece is spanned by the $\dim_{\kk} I_i =
\hf_{\kk[x_0, x_1, \dotsc, x_n]}(i) - \hf_I(i)$ largest monomials in
$\kk[x_0, x_1, \dotsc, x_n]_i$, for all $i \in \ZZ$.  The equality
$\hf_I = \hf_{L^{\hf_I}_n}$ holds by definition and $L^{\hf_I}_n$ is a
homogeneous ideal of $\kk[x_0, x_1, \dotsc, x_n]$; see
\cite[\S~II]{Macaulay--1927} or
\cite[Proposition~2.21]{Miller--Sturmfels--2005}.  More importantly,
the \emph{\bfseries (saturated) lexicographic ideal} for the Hilbert
polynomial $\hp_I$ is the monomial ideal
\[
L^{\hp_I}_n := \bigl( L^{\hf_I}_n : \langle x_0, x_1, \dotsc, x_{n}
\rangle^{\infty} \bigr) = \bigcup_{j \ge 1} \bigl\{ f \in \kk[x_0,
  x_1, \dotsc, x_n] \mid f \langle x_0, x_1, \dotsc, x_{n} \rangle^{j}
\subseteq L^{\hf_I}_n \bigr\}.
\]
Saturation with respect to the irrelevant ideal $\langle x_0, x_1,
\dotsc, x_{n} \rangle \subset \kk[x_0, x_1, \dotsc, x_n]$ does not
affect the Hilbert function in large degrees, so $L^{\hp_I}_n$ also
has Hilbert polynomial $\hp_I$.  From here on, we essentially always
work with saturated lexicographic ideals and point out when this is
not the case.

Given a finite sequence of nonnegative integers $a_0, a_1, \dotsc,
a_{n-1} \in \NN$, consider the monomial ideal $L(a_0, a_1, \dots,
a_{n-1}) \subset \kk[x_0, x_1, \dotsc, x_n]$ from
\cite[Notation~1.2]{Reeves--Stillman--1997} with generators
\[
\langle x_0^{a_{n-1}+1}, x_0^{a_{n-1}} x_1^{a_{n-2}+1}, \dotsc,
x_{0}^{a_{n-1}} x_{1}^{a_{n-2}} \dotsb x_{n-3}^{a_{2}} x_{n-2}^{a_{1}
  + 1}, x_{0}^{a_{n-1}} x_{1}^{a_{n-2}} \dotsb x_{n-2}^{a_{1}}
x_{n-1}^{a_{0}} \rangle .
\]
Lemma~\ref{lem:lexmingens}(i) appears in
\cite[Theorem~2.23]{Moore--2012}.

\begin{lemma}
  \label{lem:lexmingens}
  Let $\hp(t) = \sum_{i=0}^{d} \binom{t + i}{i + 1} - \binom{t + i -
    e_i}{i + 1}$, for integers $e_0 \ge e_1 \ge \dotsb \ge e_d > 0$,
  and let $n \in \NN$ satisfy $n > d = \deg \hp$.
  \begin{enumerate}
  \item Define $e_i = 0$, for $d+1 \le i \le n$, and $a_{j} = e_{j} -
    e_{j+1}$, for all $0 \le j \le n-1$.  We have
    \begin{align*}
      L^{\hp}_n &= L(a_0, a_1, \dotsc, a_{n-1}) \\
                &= \langle x_0, x_1, \dotsc , x_{n-d-2},
                  x_{n-d-1}^{a_{d} + 1}, \\
                &\relphantom{= \langle} x_{n-d-1}^{a_{d}}
                  x_{n-d}^{a_{d-1} + 1}, \dotsc ,
                  x_{n-d-1}^{a_{d}} x_{n-d}^{a_{d-1}} \dotsb
                  x_{n-3}^{a_{2}} x_{n-2}^{a_{1} + 1}, 
                  x_{n-d-1}^{a_{d}}
                  x_{n-d}^{a_{d-1}} \dotsb x_{n-2}^{a_{1}}
                  x_{n-1}^{a_{0}} \rangle .
    \end{align*}

  \item If there is an integer $0 \le \ell \le d-1$ such that $a_j =
    0$ for all $j \le \ell$, and $a_{\ell + 1} > 0$, then the minimal
    monomial generators of $L^{\hp}_n$ are given by $m_1, m_2, \dotsc,
    m_{n - \ell - 1}$, where
    \begin{align*}
      m_i &= x_{i-1}, \text{ for all } 1 \le i \le n-d-1, \\
      m_{n-d + k} &= \left( \prod_{j=0}^{k-1} x_{n-d-1 + j}^{a_{d -
          j}} \right) x_{n-d-1 + k}^{a_{d-k} + 1}\,, \text{ for all } 0 \le k \le d -
        \ell -2, \text{ and } \\
      m_{n-\ell-1} &= \prod_{j=0}^{d-\ell-1} x_{n-d-1 + j}^{a_{d - j}}.
    \end{align*}
    If $a_0 \ne 0$, then the minimal monomial generators are those
    listed in (i).
  \end{enumerate}
\end{lemma}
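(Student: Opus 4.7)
Part (i) follows directly from \cite[Theorem~2.23]{Moore_2012}, which identifies the saturated lexicographic ideal $L^{\hp}_n$ with the Reeves--Stillman ideal $L(a_0, a_1, \dotsc, a_{n-1})$ via the differences $a_j := e_j - e_{j+1}$. The explicit monomial form is then a direct substitution: since the partition $(e_0, e_1, \dotsc, e_d)$ is extended by zeros, the differences $a_j = 0$ for all $j \ge d+1$. Plugging these into the defining list of $n$ generators of $L(a_0, a_1, \dotsc, a_{n-1})$, the first $n-d-1$ collapse into the linear forms $x_0, x_1, \dotsc, x_{n-d-2}$ (because the accompanying exponents of lower-indexed variables all vanish and each trailing ``$+1$'' exponent reduces to $1$), while the remaining $d+1$ generators retain their product form with nonvanishing exponents $a_d, a_{d-1}, \dotsc, a_0$ at the variables $x_{n-d-1}, x_{n-d}, \dotsc, x_{n-1}$.

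For Part (ii), I label the $n$ generators from Part (i) as $g_1, g_2, \dotsc, g_n$ in the listed order and apply the standard criterion that a monomial lies in the minimal generating set of a monomial ideal if and only if no other generator properly divides it. Under the assumption $a_0 = a_1 = \dotsb = a_\ell = 0$ and $a_{\ell+1} > 0$, the key observation is that the final generator simplifies significantly: the trailing factors $x_{n-1}^{a_0}, x_{n-2}^{a_1}, \dotsc, x_{n-1-\ell}^{a_\ell}$ all equal $1$, so
\[
g_n = x_{n-d-1}^{a_d}\, x_{n-d}^{a_{d-1}} \dotsb x_{n-\ell-2}^{a_{\ell+1}}.
\]

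The main combinatorial step is then verifying that for each $1 \le k \le \ell$, the generator $g_{n-k} = x_0^{a_{n-1}} \dotsb x_{n-1-k}^{a_k+1}$ is divisible by this simplified $g_n$. Substituting the vanishing assumptions into $g_{n-k}$ reduces it to $g_n \cdot x_{n-1-k}$, yielding the divisibility immediately. Hence $g_{n-\ell}, g_{n-\ell+1}, \dotsc, g_{n-1}$ are all redundant, leaving $n - \ell - 1$ surviving candidate generators. I would then verify minimality of the surviving set by direct inspection: the linear generators use variables disjoint from the non-linear ones, and the remaining non-linear generators have strictly nested support with distinct maximal variables of positive exponent, so no two of them divide one another. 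Finally, match these surviving generators with the formulas $m_i$ stated in the lemma via the natural relabeling. The closing case $a_0 \ne 0$ is then automatic: no collapse occurs in $g_n$ and no redundancies arise, so Part~(i)'s list is already minimal.

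The main obstacle is the bookkeeping in the divisibility analysis---carefully tracking how the vanishing conditions on the $a_j$'s propagate through the list of $n$ generators, and ensuring that the reindexed surviving generators precisely match the product formulas $m_{n-d+k}$ across the stated range $0 \le k \le d-(\ell+1)$.
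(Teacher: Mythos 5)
Your handling of Part~(i) is fine: the paper itself records that this part appears in Moore's Theorem~2.23, and your index bookkeeping for how the first $n-d-1$ generators collapse to $x_0,\dotsc,x_{n-(d+2)}$ is correct. (The paper additionally gives a self-contained verification that $L(a_0,\dotsc,a_{n-1})$ is a lex-segment ideal, is saturated, and has Hilbert polynomial $\hp$ -- the last by induction on $d$ via short exact sequences -- but deferring that to the citation is legitimate.)

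Part~(ii) has a genuine off-by-one gap in the redundancy analysis. You discard only the $\ell$ generators $g_{n-1},\dotsc,g_{n-\ell}$, i.e.\ those $g_{n-k}$ with $1\le k\le\ell$, for which $a_k=0$ forces the trailing exponent $a_k+1$ down to $1$ and gives $g_{n-k}=g_n\cdot x_{n-1-k}$. But the generator $g_{n-\ell-1}=x_{n-(d+1)}^{a_d}\dotsb x_{n-(\ell+3)}^{a_{\ell+2}}x_{n-(\ell+2)}^{a_{\ell+1}+1}$ is \emph{also} redundant: it equals $g_n\cdot x_{n-(\ell+2)}$, because the simplified $g_n$ ends in $x_{n-(\ell+2)}^{a_{\ell+1}}$ and the two monomials agree on every earlier variable. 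Your argument does not cover this case, since here $a_{\ell+1}>0$ and the vanishing assumptions do not simplify the trailing exponent; the divisibility holds for a different reason (the exponents on $x_{n-(\ell+2)}$ differ by exactly one). Removing only $\ell$ generators from $n$ leaves $n-\ell$ survivors, not the $n-(\ell+1)$ you assert, and the set you retain is not minimal. The paper's proof explicitly lists $g_{n-\ell-1}$ among the redundant generators, so the correct redundant range is $1\le k\le\ell+1$. A related point you will hit when ``matching via the natural relabeling'': the last surviving minimal generator is $g_n$ in its simplified form, carrying exponent $a_{\ell+1}$ (not $a_{\ell+1}+1$) on $x_{n-(\ell+2)}$; compare Example~\ref{eg:twistedlexexpand}, where $\ell=0$ and the minimal generator is $x_1^{3}=x_1^{a_1}$ rather than $x_1^{4}$. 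So the displayed formula for $m_{n-d+k}$ must be read with this boundary adjustment at $k=d-(\ell+1)$, and your matching step needs to say so explicitly.
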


\begin{proof} $\;$
  \begin{enumerate} 
  \item It is straightforward to check that the ideal $L = L(a_0, a_1,
    \dotsc, a_{n-1})$ is saturated and lexicographic.  To see that $L$
    has the correct Hilbert polynomial, one can first prove that $L' =
    L(0, \dotsc, 0, a_d, 0, \dotsc, 0)$ has Hilbert polynomial
    $\sum_{i = 0}^d \binom{t+i}{i+1} - \binom{t+i-a_d}{i+1}$.  The
    general case then follows by induction on $d = \deg \hp$ and the
    short exact sequence
    \[ 
    0 \to \left( \kk[x_0, x_1, \dotsc, x_n] / L'' \right)(-a_d) \to
    \kk[x_0, x_1, \dotsc, x_n] / L \to \kk[x_0, x_1, \dotsc, x_n] / L'
    \to 0,
    \]
    where $L'' = L(a_0, a_1, \dotsc, a_{d-1}, 0, 0, \dotsc, 0)$ and
    the injection sends $1 \mapsto x_{n-d-1}^{a_{d}}$.

  \item We know that $x_{n-d-1}^{a_{d}} x_{n-d}^{a_{d-1}} \dotsb
    x_{n-2}^{a_{1}} x_{n-1}^{a_{0}} = x_{n-d-1}^{a_{d}}
    x_{n-d}^{a_{d-1}} \dotsb x_{n-\ell-3}^{a_{\ell+2}}
    x_{n-\ell-2}^{a_{\ell+1}}$, because either $a_0 = a_1 = \dotsb =
    a_{\ell} = 0$, or $a_0 \ne 0$ and $\ell = -1$.  If $\ell \ge 0$,
    then the monomial generators
    \begin{align*}
      x_{n-d-1}^{a_{d}} x_{n-d}^{a_{d-1}} \dotsb
      x_{n-\ell-3}^{a_{\ell+2}} x_{n-\ell-2}^{a_{\ell+1}+1}, \quad
      x_{n-d-1}^{a_{d}} x_{n-d}^{a_{d-1}} \dotsb
      x_{n-\ell-2}^{a_{\ell+1}} x_{n-\ell-1}^{a_{\ell}+1},\quad
      \dotsc,\qquad \\
      \hfill x_{n-d-1}^{a_{d}} x_{n-d}^{a_{d-1}} \dotsb
      x_{n-3}^{a_{2}} x_{n-2}^{a_{1}+1}
    \end{align*}
    from (i) are redundant, as they are multiples of the last monomial
    generator.  Removing these gives the monomial generators $m_1,
    m_2, \dotsc, m_{n-\ell-1}$.  For all $1< i < n-\ell$ and all $0 <
    j < i$ there exists $x_k$ dividing $m_j$ to higher order than the
    order to which it divides $m_i$ and minimality follows. \qedhere
  \end{enumerate}
\end{proof}

The nonminimal list of generators in Lemma~\ref{lem:lexmingens}(i) is
useful for describing operations on lexicographic ideals in terms of
Macaulay--Hartshorne and Gotzmann partitions.  Importantly,
Lemma~\ref{lem:lexmingens} shows that all sequences $(a_0, a_1,
\dotsc, a_{n-1})$ of nonnegative integers determine a lexicographic
ideal.

The next example uses Lemma~\ref{lem:lexmingens} to identify minimal
monomial generators.

\begin{example}
  \label{eg:twistedlexmingens} 
  The twisted cubic $X \subset \PP^3$ has $\hp_X(t) = \left[ \binom{t
      + 0}{0 + 1} - \binom{t + 0 - 4}{0 + 1} \right] + \left[ \binom{t
      + 1}{1 + 1} - \binom{t + 1 - 3}{1 + 1} \right]$, with ideal
  $L^{3t + 1}_3 \subset \kk[x_0, x_1, x_2, x_3]$.  We have $d = 1$ and
  $(e_0, e_1, e_2, e_3) = (4, 3, 0, 0)$, so that $(a_0, a_1, a_2) =
  (1, 3, 0)$.  Applying Lemma~\ref{lem:lexmingens} yields $L^{3t +
    1}_3 = L(1, 3, 0) = \langle x_0, x_1^4, x_1^3 x_2 \rangle$.
\end{example}

By analogy with $\plus$, we define the \emph{\bfseries lex-expansion}
of $L^{\hp}_n = L(a_0, a_1, \dotsc, a_{n-1})$ to be the lexicographic
ideal $\plus \bigl( L^{\hp}_n \bigr) := L(a_0 + 1, a_1, a_2, \dotsc,
a_{n-1})$.

\begin{lemma} 
  \label{lem:lexexpansion}
  Let $\hp$ be an admissible Hilbert polynomial and $n > \deg \hp$ a
  positive integer.  We have $\plus \bigl( L^{\hp}_n \bigr) = L^{\plus
    (\hp)}_n$ and the mapping $\plus$ on lexicographic ideals
  preserves codimension.
\end{lemma}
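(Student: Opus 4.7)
The plan is to reduce the statement to a direct comparison of combinatorial data via Lemma~\ref{lem:lexmingens}(i). Let $\hp$ have Macaulay--Hartshorne partition $(e_0, e_1, \dotsc, e_d)$ and set $e_i := 0$ for $i > d$, so that $a_j := e_j - e_{j+1}$ for $0 \le j \le n-1$. By Lemma~\ref{lem:lexmingens}(i), we have $L^{\hp}_n = L(a_0, a_1, \dotsc, a_{n-1})$, and by definition of lex-expansion, $\plus \bigl( L^{\hp}_n \bigr) = L(a_0 + 1, a_1, \dotsc, a_{n-1})$.

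Next, I would recall from the discussion preceding Theorem~\ref{thm:Macaulaytree} that $\plus(\hp) = 1 + \hp$ has Macaulay--Hartshorne partition $(e_0 + 1, e_1, e_2, \dotsc, e_d)$. Because $\plus(\hp)$ still has degree $d$ and $n > d$, the hypothesis of Lemma~\ref{lem:lexmingens}(i) applies to $\plus(\hp)$ with the same ambient polynomial ring $\kk[x_0, x_1, \dotsc, x_n]$. The associated difference sequence for $\plus(\hp)$ is $a'_0 := (e_0 + 1) - e_1 = a_0 + 1$ and $a'_j := a_j$ for $1 \le j \le n-1$. Therefore Lemma~\ref{lem:lexmingens}(i) yields $L^{\plus(\hp)}_n = L(a_0 + 1, a_1, \dotsc, a_{n-1})$, which matches the description of $\plus \bigl( L^{\hp}_n \bigr)$ above.

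For the codimension claim, I would observe that the codimension of the closed subscheme cut out by $L^{\hp}_n$ is $n - \deg \hp$, and since $\plus(\hp) = 1 + \hp$ does not change the degree, we have $\deg \plus(\hp) = \deg \hp = d$, so $L^{\plus(\hp)}_n$ also has codimension $n - d$.

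Honestly, there is no serious obstacle here: once the Macaulay--Hartshorne partition of $\plus(\hp)$ is in hand, the proof is a bookkeeping comparison of the sequences defining the two lex-segment ideals via Lemma~\ref{lem:lexmingens}(i). The only mild care required is ensuring that $n > \deg \plus(\hp)$ so that Lemma~\ref{lem:lexmingens}(i) applies on both sides, which follows immediately from $\deg \plus(\hp) = \deg \hp$.
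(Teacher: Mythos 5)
Your proof is correct, but it takes a different route from the paper's. The paper's proof is a one-line forward reference to Lemma~\ref{lem:Hpolyexp}: it relies on the fact (verified later, in Example~\ref{eg:lexexpansion}) that the lex-expansion $L(a_0+1, a_1, \dotsc, a_{n-1})$ is precisely the expansion of $L^{\hp}_n$ at its last minimal monomial generator in the sense of Section~\ref{ch:stronglystable}, so that the general statement about Hilbert polynomials of expansions of saturated Borel ideals gives $\hp_{\plus(L^{\hp}_n)} = 1 + \hp$. You instead stay entirely within the combinatorics of Section~\ref{ch:lexforest}: you read off the Macaulay--Hartshorne partition $(e_0+1, e_1, \dotsc, e_d)$ of $\plus(\hp)$, note that the associated difference sequence is $(a_0+1, a_1, \dotsc, a_{n-1})$, and apply Lemma~\ref{lem:lexmingens}(i) to both sides. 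This is exactly the style of argument the paper uses for the $\lift$ analogue in Proposition~\ref{prop:lexextension}, and it has the advantage of being self-contained and avoiding the forward dependence on the Borel-expansion machinery; the paper's version, in exchange, exhibits lex-expansion as a special case of the general expansion operation that drives Algorithm~\ref{alg:SSS}. Your handling of the codimension claim matches the paper's ($\deg\plus(\hp) = \deg\hp$, so $n - \deg\plus(\hp) = n - \deg\hp$), and your check that $n > \deg\plus(\hp)$ is the right hypothesis to verify before reapplying Lemma~\ref{lem:lexmingens}(i).
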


\begin{proof}
  See Lemma~\ref{lem:lexmingens}(i).  Note that $n - \deg \plus (\hp)
  = n - \deg \hp$.
\end{proof}

In analogy with $\lift$, for any ideal $I \subseteq \kk[x_0, x_1,
  \dotsc, x_n]$, we denote the \emph{\bfseries extension} ideal by
$\lift ( I ) = I \cdot \kk[x_0, x_1, \dotsc, x_{n+1}]$.  The following
is similar to Lemma~\ref{lem:lexexpansion}.

\begin{proposition} 
  \label{prop:lexextension}
  Let $\hp$ be an admissible Hilbert polynomial and $n > \deg \hp$ an
  integer.  We have $\lift \bigl( L^{\hp}_n \bigr) = L^{\lift
    (\hp)}_{n+1}$.  Equivalently $\lift \bigl( L(a_0, a_1, \dotsc,
  a_{n-1}) \bigr) = L(0, a_0, a_1, \dotsc, a_{n-1})$ holds, for all
  $a_0, a_1, \dotsc, a_{n-1} \in \NN$, and extension preserves
  codimension.
\end{proposition}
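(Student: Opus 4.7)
The plan is to prove the two equalities in tandem, first pinning down the second identity $\lift\bigl(L(a_0, a_1, \dotsc, a_{n-1})\bigr) = L(0, a_0, a_1, \dotsc, a_{n-1})$ by direct comparison of generators, and then derive the Hilbert-polynomial statement $\lift(L^{\hp}_n) = L^{\lift(\hp)}_{n+1}$ by combining this with the explicit dictionary given by Lemma~\ref{lem:lexmingens}(i).

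First I would expand $L(0, a_0, a_1, \dotsc, a_{n-1}) \subset \kk[x_0, x_1, \dotsc, x_{n+1}]$ using the generating set in Lemma~\ref{lem:lexmingens}(i), which, with parameters $b_0 := 0$ and $b_i := a_{i-1}$ for $1 \le i \le n$, yields
\[
  \langle x_0^{a_{n-1}+1}, \dotsc, x_0^{a_{n-1}} x_1^{a_{n-2}} \dotsb x_{n-1}^{a_{0} + 1}, x_0^{a_{n-1}} x_1^{a_{n-2}} \dotsb x_{n-1}^{a_{0}} x_n^{0} \rangle.
\]
The second-to-last generator is a multiple of the last (whose trailing factor $x_n^0$ is $1$), so it is redundant, and what remains is exactly the generating list for $L(a_0, a_1, \dotsc, a_{n-1})$ as written in Lemma~\ref{lem:lexmingens}(i), but now viewed inside $\kk[x_0, x_1, \dotsc, x_{n+1}]$. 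Since $\lift$ is defined by $\lift(I) := I \cdot \kk[x_0, x_1, \dotsc, x_{n+1}]$, the second identity follows.

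Next, with $\hp$ having Macaulay--Hartshorne partition $(e_0, e_1, \dotsc, e_d)$, I set $a_j := e_j - e_{j+1}$ for $0 \le j \le n-1$ (with $e_i := 0$ for $i > d$), so that Lemma~\ref{lem:lexmingens}(i) gives $L^{\hp}_n = L(a_0, a_1, \dotsc, a_{n-1})$. For $\lift(\hp)$, the Macaulay--Hartshorne partition is $(e_0, e_0, e_1, \dotsc, e_d)$, so writing $e'_j$ for its parts and setting $a'_j := e'_j - e'_{j+1}$ for $0 \le j \le n$ yields $a'_0 = e_0 - e_0 = 0$ and $a'_j = e_{j-1} - e_j = a_{j-1}$ for $1 \le j \le n$. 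Hence $L^{\lift(\hp)}_{n+1} = L(0, a_0, a_1, \dotsc, a_{n-1})$, and combining with the second identity gives $\lift(L^{\hp}_n) = L^{\lift(\hp)}_{n+1}$. Finally, codimension is preserved because $\deg \lift(\hp) = \deg \hp + 1$ while the ambient projective space jumps from $\PP^n$ to $\PP^{n+1}$.

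I do not expect a significant obstacle: the only delicate point is keeping the two indexings straight, specifically that prepending a zero to the parameter sequence introduces one redundant generator that exactly accounts for the new top variable, and that the Macaulay--Hartshorne partition of $\lift(\hp)$ is obtained by repeating $e_0$, which is what makes the $a'_j$ sequence equal $(0, a_0, \dotsc, a_{n-1})$.
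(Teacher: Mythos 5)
Your proof is correct and follows essentially the same route as the paper: both identify the extended ideal with $L(0, a_0, \dotsc, a_{n-1})$ by comparing the generator lists of Lemma~\ref{lem:lexmingens}(i), with the prepended zero accounting for exactly one redundant generator. The only cosmetic difference is the final identification of the Hilbert polynomial: the paper reads off the Gotzmann partition of $\hp_{\lift(L^{\hp}_n)}$ via Lemma~\ref{lem:dictionary}, whereas you compute $L^{\lift(\hp)}_{n+1}$ directly from the Macaulay--Hartshorne partition $(e_0, e_0, e_1, \dotsc, e_d)$ of $\lift(\hp)$ --- equivalent bookkeeping on the two conjugate partitions.
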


\begin{proof}
  This follows by relabelling $a_{0}' = 0$ and $a_{i}' = a_{i-1}$, for
  $0 < i \le d+1$, on generators.
\end{proof}

The \emph{\bfseries lexicographic point} of $\hilb^{\hp}(\PP^n)$ is
the point $\big[X_{L^{\hp}_n}\big]$, where $X_{I} \subseteq \PP^n$ denotes the
closed subscheme with saturated ideal $I$.  The lexicographic point is
nonsingular and lies on a unique irreducible component
$\hilb^{\hp}(\PP^n)$ called the \emph{\bfseries lexicographic
  component} \cite{Reeves--Stillman--1997}.  We can now describe a
tree structure on the set of Hilbert schemes.  We regard this as a
rough chart of the geography of Hilbert schemes, developed further in
Section~\ref{ch:Hilbertirred}.

\begin{theorem}
  \label{thm:hilbtree}
  For each positive codimension $c \in \ZZ$, the graph $\hilbtree_c$
  whose vertex set consists of all nonempty Hilbert schemes
  $\hilb^{\hp}(\PP^n)$ parametrizing codimension $c$ subschemes and
  whose edges are all pairs $\left( \hilb^{\hp}(\PP^n),
  \hilb^{\plus(\hp)}(\PP^n) \right)$ and $\left( \hilb^{\hp}(\PP^n),
  \hilb^{\lift(\hp)}(\PP^{n+1}) \right)$, where $\hp$ is an admissible
  Hilbert polynomial and $n = c + \deg \hp$, is an infinite full
  binary tree.  The root of the tree $\hilbtree_c$ is the Hilbert
  scheme $\hilb^1(\PP^c) = \PP^c$.
\end{theorem}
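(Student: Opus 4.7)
The plan is to transfer the tree structure from $\Macaulaytree$ (or equivalently $\lextree_c$) directly to $\hilbtree_c$ via a degree-tracking bijection. The key observation is that a Hilbert scheme $\hilb^{\hp}(\PP^n)$ parametrizing codimension $c$ subschemes satisfies $c = n - \deg \hp$, so $n$ is determined by $\hp$ and $c$. Hence the assignment $\hp \mapsto \hilb^{\hp}(\PP^{c + \deg \hp})$ is a well-defined map from the admissible Hilbert polynomials to the vertex set of $\hilbtree_c$.

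First I would verify that this map is a bijection on vertices. Every nonempty Hilbert scheme of codimension $c$ has an admissible Hilbert polynomial $\hp$ by Proposition~\ref{prop:expressions} and is parametrized over $\PP^n$ for $n = c + \deg \hp$, giving surjectivity; injectivity is immediate since $\hp$ and $n$ are both recovered from $\hilb^{\hp}(\PP^n)$. Thus the map supplies a bijection between the vertex sets of $\Macaulaytree$ and $\hilbtree_c$.

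Next I would check that this bijection respects the two edge types. From the definitions in Section~\ref{ch:Macaulaytree}, the Macaulay--Hartshorne partition of $\plus(\hp)$ is $(e_0 + 1, e_1, \dotsc, e_d)$, so $\deg \plus(\hp) = \deg \hp = d$; therefore the pair $(\hp, \plus(\hp))$ in $\Macaulaytree$ maps to $\bigl( \hilb^{\hp}(\PP^n), \hilb^{\plus(\hp)}(\PP^n) \bigr)$ with $n = c + \deg \hp$, which is precisely an edge of $\hilbtree_c$. Similarly, $\lift(\hp)$ has Macaulay--Hartshorne partition $(e_0, e_0, e_1, \dotsc, e_d)$, so $\deg \lift(\hp) = \deg \hp + 1$; the pair $(\hp, \lift(\hp))$ therefore maps to $\bigl( \hilb^{\hp}(\PP^n), \hilb^{\lift(\hp)}(\PP^{n+1}) \bigr)$ with $n+1 = c + \deg \lift(\hp)$, again matching the edge description in the statement. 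Conversely, every edge of $\hilbtree_c$ arises from such a pair, so the bijection is a graph isomorphism.

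Applying Theorem~\ref{thm:Macaulaytree}, which asserts that $\Macaulaytree$ is an infinite binary tree with root the constant polynomial $1$, we conclude that $\hilbtree_c$ is also an infinite binary tree. The image of the root under the bijection is $\hilb^{1}(\PP^{c + \deg 1}) = \hilb^{1}(\PP^c)$, giving the claimed root. The only step requiring any care is verifying the degree behaviour of $\plus$ and $\lift$, which I expect to be the main (but still routine) obstacle; once this is in hand, the theorem follows as a direct transport of structure, and one could equally well route the isomorphism through $\lextree_c$ via $L^{\hp}_n \mapsto \hilb^{\hp}(\PP^n)$ using Theorem~\ref{thm:lextree}.
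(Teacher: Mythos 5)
Your proposal is correct and follows essentially the same route as the paper: the paper's proof likewise observes that a pair $(\hp, c)$ uniquely determines $\hilb^{\hp}(\PP^n)$ with $n = c + \deg\hp$, and that this correspondence transports the tree structure of $\Macaulaytree$ (equivalently $\lextree_c$) to $\hilbtree_c$. Your explicit verification that $\plus$ preserves degree while $\lift$ raises it by one, so that codimension is preserved along both edge types, is exactly the detail the paper leaves implicit.
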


\begin{proof}
  Each pair of an admissible Hilbert polynomial $\hp$ and positive $c
  \in \ZZ$ uniquely determines $L = L^{\hp}_{c + \deg \hp}$ and the
  Hilbert scheme $\hilb^{\hp}(\PP^{c + \deg \hp})$ containing $[X_L]$.
  Lemma~\ref{lem:lexexpansion} and Proposition~\ref{prop:lexextension}
  combined with Lemma~\ref{lem:lexmingens} show that the mapping $\hp
  \mapsto \hilb^{\hp}(\PP^{c + \deg(\hp)})$ is a graph isomorphism.
  The root is then $\hilb^1(\PP^c) = \PP^c$.
\end{proof}

For positive $c \in \ZZ$, we call the tree of
Theorem~\ref{thm:hilbtree} the \emph{\bfseries Hilbert tree of
  codimension $c$}, denoted $\hilbtree_c$.  We call the disjoint union
$\hilbtree = \bigsqcup_{c \in \NN, c > 0} \hilbtree_c$ the
\emph{\bfseries Hilbert forest}.

\section{Strongly Stable Ideals}
\label{ch:stronglystable}

This section reviews a well-known algorithm that generates saturated
strongly stable ideals and then examines analogues of $\lift$ and
$\plus$ for strongly stable ideals.

A monomial ideal $I \subseteq \kk[x_0, x_1, \dotsc , x_n]$ is
\emph{\bfseries strongly stable}, or \emph{\bfseries $0$-Borel}, if,
for all monomials $m \in I$, for all $x_j$ dividing $m$, and for all
$x_i \lexg x_j$, we have $x_j^{-1} m x_i \in I$.  In characteristic
$0$, this is equivalent to being \emph{\bfseries Borel-fixed},
i.e.\ fixed by the action $\gamma \cdot x_j = \sum_{i=0}^{n}
\gamma_{ij} x_i$ of upper triangular matrices $\gamma \in
\operatorname{GL}_{n+1}(\kk)$
\cite[Proposition~2.7]{Bayer--Stillman--1987}.  For any monomial $m$,
let $\max m$ be the maximum index $j$ such that $x_j$ divides $m$, and
$\min m$ be the minimum such index.

\begin{example}
  The monomial ideal $I = \langle x_0^2, x_0 x_1, x_1^2 \rangle
  \subset \kk[x_0, x_1, x_2]$ is strongly stable.  The monomial $m =
  x_1^5 x_2 x_7^2 \in \kk[x_0, x_1, \dotsc, x_{13}]$ satisfies $\max m
  = 7$ and $\min m = 1$.
\end{example}

For a monomial ideal $I$, let $G(I)$ denote its minimal set of
monomial generators.  We gather some useful properties of strongly
stable ideals in the following lemma.

\begin{lemma}
  \label{lem:SSproperties}
  Let $I \subseteq \kk[x_0, x_1, \dotsc, x_n]$ be a monomial ideal.
  \begin{enumerate}
  \item The ideal $I$ is strongly stable if and only if, for all $g
    \in G(I)$, for all $x_j$ dividing $g$, and for all $x_i \lexg
    x_j$, we have $g x_i x_j^{-1} \in I$.
    
  \item If, for all $g' \in G(I)$, for all $x_j$ dividing $g'$, and
    for all $x_i \lexg x_j$, we have $g' x_i x_j^{-1} \in I$, then,
    for all monomials $m \in I$, there exists a unique $g \in G(I)$
    and unique monomial $m' \in \kk[x_0, x_1, \dotsc, x_n]$ such that
    $m = g m'$ and $\max g \le \min m'$.
    
  \item If $I$ is a strongly stable ideal, then $I$ is saturated with
    respect to the irrelevant ideal $\langle x_0, x_1, \dotsc, x_n
    \rangle$ if and only if no minimal monomial generators of $I$ are
    divisible by $x_n$.
    
  \item If $I$ is strongly stable with constant Hilbert polynomial
    $\hp_I \in \NN$, then there exists an integer $k \in \NN$ such
    that $x_{n-1}^k \in I$.
  \end{enumerate}
\end{lemma}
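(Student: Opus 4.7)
The plan is to handle the four parts in order. For Part~(i), the forward direction is immediate since minimal monomial generators belong to $I$. For the converse, I would take an arbitrary monomial $m \in I$, factor $m = g \cdot m'$ with $g$ a minimal monomial generator of $I$, and split into two cases: if $x_j \mid g$, the hypothesis gives $g x_i x_j^{-1} \in I$ and multiplying by $m'$ yields $m x_i x_j^{-1} \in I$; if $x_j \mid m'$, then $m x_i x_j^{-1} = g \cdot (m' x_i x_j^{-1})$ is a monomial multiple of $g \in I$. For Part~(ii), the crucial ingredient is a monovariant on the finite set of factorizations $m = g m'$ with $g$ a minimal generator of $I$; to each such factorization I would assign the reversed exponent vector $\nu(g) := \bigl( \nu_n(g), \nu_{n-1}(g), \dotsc, \nu_0(g) \bigr)$, where $\nu_k(g)$ is the exponent of $x_k$ in $g$, compared lexicographically, and pick a factorization minimizing $\nu(g)$. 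If $\max g > \min m'$, setting $j := \max g$ and $i := \min m'$ gives an allowed Borel move, the hypothesis supplies $g' := g x_i x_j^{-1} \in I$, and $g' \mid m$ (because $x_i \mid m'$); any minimal generator $h$ dividing $g'$ satisfies $\nu_k(h) \le \nu_k(g)$ for $k > j$ and $\nu_j(h) \le \nu_j(g) - 1$, whence $\nu(h) < \nu(g)$, contradicting the choice of $g$. For uniqueness, two standard factorizations $m = g_1 m_1' = g_2 m_2'$ force each $g_\ell$ to collect exactly the $x_k$-exponents of $m$ for $k < \max g_\ell$, so matching thresholds gives $g_1 = g_2$.

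For Part~(iii), I would begin by showing $\sat(I) = I : x_n^\infty$ for any Borel ideal $I$. The inclusion $\sat(I) \subseteq I : x_n^\infty$ is immediate. For the reverse, if $m x_n^N \in I$, then iteratively applying Borel moves that replace one factor $x_n$ at a time by an arbitrary $x_i$ with $i < n$ shows $m \cdot \mu \in I$ for every monomial $\mu$ of degree $N$, so $m \mathfrak{m}^N \subseteq I$ and $m \in \sat(I)$. Consequently $I$ is saturated iff $I = I : x_n^\infty$, equivalently $I = I : x_n$; this final equality is equivalent to no minimal generator being divisible by $x_n$, since if $x_n \mid g_k$ then $g_k / x_n \in I : x_n$ properly divides the minimal generator $g_k$ and cannot lie in $I$. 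For Part~(iv), I would pass to $\sat(I)$, which is Borel with the same constant Hilbert polynomial. By Part~(iii) no minimal generator of $\sat(I)$ involves $x_n$, so $\sat(I) = J \cdot \kk[x_0, x_1, \dotsc, x_n]$ for some monomial ideal $J \subseteq \kk[x_0, x_1, \dotsc, x_{n-1}]$. The tensor decomposition $\kk[x_0, x_1, \dotsc, x_n]/\sat(I) \cong \bigl( \kk[x_0, x_1, \dotsc, x_{n-1}]/J \bigr) \otimes_{\kk} \kk[x_n]$ converts the constant Hilbert polynomial of $\sat(I)$ into a vanishing Hilbert polynomial for $J$, so $\kk[x_0, x_1, \dotsc, x_{n-1}]/J$ is Artinian and $\langle x_0, x_1, \dotsc, x_{n-1} \rangle^{k'} \subseteq J \subseteq \sat(I)$ for some $k' \in \NN$. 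In particular $x_{n-1}^{k'} \in \sat(I) = I : x_n^\infty$, so $x_{n-1}^{k'} x_n^{N} \in I$ for some $N \in \NN$, and iteratively Borel-replacing each $x_n$ by $x_{n-1}$ produces $x_{n-1}^{k' + N} \in I$.

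The principal obstacle is the existence argument in Part~(ii), where the monovariant must be chosen so that every Borel generator-move strictly decreases it. The naive lex order on $(\nu_0(g), \nu_1(g), \dotsc, \nu_n(g))$ fails, because the move $g \mapsto g x_i x_j^{-1}$ raises $\nu_i(g)$ by one, which overwhelms the unit decrease in $\nu_j(g)$ when $i < j$. Reversing the coordinate order repairs this: Borel generator-moves alter only coordinates with index $\le j$, and among those the highest-indexed is $\nu_j$, which strictly drops, so $\nu(g)$ decreases in the reversed lexicographic order.
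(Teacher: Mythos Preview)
Your proposal is correct. The paper's own proof is much terser: it outsources (ii) to \cite[Lemma~2.11]{Miller_Sturmfels_2005} and (iv) to \cite[Lemma~3.17]{Moore_2012}, whereas you supply self-contained arguments for both. Your existence argument for (ii) via the reversed-lex monovariant on $\bigl(\nu_n(g),\dotsc,\nu_0(g)\bigr)$ is sound and is essentially the content of the cited reference. Two minor differences in route are worth noting. For (i), the paper invokes the canonical factorization from (ii) before doing the case split, which creates a forward dependence; your version uses an \emph{arbitrary} factorization $m=gm'$ and observes that $x_j$ divides $g$ or $m'$, which is more elementary and logically cleaner. For (iii), the paper argues directly: if $x_n\mid g$ for a minimal generator, then $g x_n^{-1}\in(I:\mathfrak m)\setminus I$; conversely, any $m\in(I:\mathfrak m)\setminus I$ forces $mx_n$ itself to be a minimal generator (via the canonical factorization of (ii)). Your route through the identity $\sat(I)=I:x_n^\infty$ is equally valid and has the bonus of feeding directly into your argument for (iv). Your uniqueness sketch in (ii) is the thinnest part---``matching thresholds'' needs the observation that if $g_1\neq g_2$ are both minimal then neither divides the other, and any index $k$ with $\nu_k(g_1)>\nu_k(g_2)$ is squeezed between $\max g_2$ and $\max g_1$, forcing $\max g_1=\max g_2$ and a contradiction---but the idea is right.
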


\begin{proof} $\;$
  \begin{enumerate}
  \item If $I$ is strongly stable, then this holds for all $g \in I$.
    Conversely, let $m \in I$ be any monomial.  By (ii), there is a
    unique factorization $m = g m'$, where $g \in G(I)$ and $m' \in
    \kk[x_0, x_1, \dotsc, x_n]$ is a monomial such that $\max g \le
    \min m'$.  Let $x_j$ divide $m$ and let $x_i \lexg x_j$.  Either
    $x_j$ divides $g$, in which case $g x_i x_j^{-1} \in I$ and $m x_i
    x_j^{-1} \in I$, or $x_j$ divides $m'$, in which case $m x_i
    x_j^{-1}$ is a multiple of $g$.

  \item See the proof of \cite[Lemma~2.11]{Miller--Sturmfels--2005}.

  \item If $x_n$ divides a minimal generator $g \in G(I)$, then for
    all $x_j$ we have $(g x_n^{-1}) x_j \in I$, while $g x_n^{-1}
    \notin I$.  Conversely, any monomial $m \in (I : \langle x_0, x_1,
    \dotsc, x_{n} \rangle) \setminus I$ yields a minimal monomial
    generator $m x_n \in G(I)$, by (ii).
  
  \item See the proof of \cite[Lemma~3.17]{Moore--2012}.  \qedhere
  \end{enumerate}
\end{proof}

Strongly stable ideals are generated by expansions.  Let $I \subseteq
\kk[x_0, x_1, \dotsc, x_n]$ be a saturated strongly stable ideal.  A
generator $g \in G(I)$ is \emph{\bfseries expandable} if there are no
minimal monomial generators of $I$ in the set $\left\{ x_i^{-1} g
x_{i+1} \mid x_i \text{ divides } g \text{ and } 0 \le i < n-1
\right\}$.  The \emph{\bfseries expansion} of $I$ at an expandable
generator $g$ is the monomial ideal
\[ 
I' = \langle I \setminus \langle g \rangle \rangle + \langle g x_{j}
\mid \max g \le j \le n-1 \rangle \subset \kk[x_0, x_1, \dotsc, x_n] ;
\]
see \cite[Definition~3.4]{Moore--2012}.  The monomial $1 \in \langle 1
\rangle$ is vacuously expandable with expansion $\langle x_0, x_1,
\dotsc, x_{n-1} \rangle \subset \kk[x_0, x_1, \dotsc, x_{n}]$. Parts
(i) and (iii) of Lemma~\ref{lem:SSproperties} ensure that the
expansion of a saturated strongly stable ideal is again saturated and
strongly stable.

\begin{example}
  \label{eg:lexexpansion}
  Let $L^{\hp}_n = L(a_0, a_1, \dotsc, a_{n-1}) = \langle m_1, m_2,
  \dotsc, m_{n - \ell - 1} \rangle$ be lexicographic, as in
  Lemma~\ref{lem:lexmingens}.  The last generator is $m_{n-\ell-1} =
  x_{n-d-1}^{a_{d}} x_{n-d}^{a_{d-1}} \dotsb x_{n-\ell-3}^{a_{\ell+2}}
  x_{n-\ell-2}^{a_{\ell+1}}$.  If $a_i > 0$, then $x_{n-i-1}$ divides
  $m_{n-\ell-1} x_{n-i} x_{n-i-1}^{-1}$ to order $a_i - 1$, which is
  not the case for any $m_j$.  Therefore, the expansion at
  $m_{n-\ell-1}$ has generators
  \[
  \{ m_1, m_2, \dotsc, m_{n-\ell-2}, m_{n-\ell-1} x_{n-\ell-2},
  m_{n-\ell-1} x_{n-\ell-1}, \dotsc, m_{n-\ell-1} x_{n-1} \}.
  \]
  These are easily verified to be minimal generators of $L(a_0 + 1,
  a_1, a_2, \dotsc, a_{n-1}) = \plus \bigl( L^{\hp}_n \bigr)$.
\end{example}

For a saturated strongly stable ideal $I \subseteq \kk[x_0, x_1,
  \dotsc, x_n]$, let $\nabla \left( I \right) \subseteq \kk[x_0, x_1,
  \dotsc, x_{n-1}]$ be the image of $I$ under the mapping $\kk[x_0,
  x_1, \dotsc, x_{n}] \to \kk[x_0, x_1, \dotsc, x_{n-1}]$, defined by
$x_j \mapsto x_j$ for $0 \le j \le n-2$ and $x_j \mapsto 1$ for $n-1
\le j \le n$.  This is the saturation of $I \cap \kk[x_0, x_1, \dotsc,
  x_{n-1}]$.

The following lemma generalizes Lemma~\ref{lem:lexexpansion} and
Proposition~\ref{prop:lexextension} to strongly stable ideals.

\begin{lemma}
  \label{lem:HpolyBorel}
  Let $I$ be a saturated strongly stable ideal.
  \begin{enumerate}
  \item If $I'$ is any expansion of $I$, then we have $\hp_{I'} =
    \plus (\hp_I)$.

  \item We have $\hp_{\nabla \left( I \right)} = \nabla (\hp_I)$.

  \item There exists $j \in \NN$ such that $\hp_{\lift ( I )} =
    \plus^j \lift (\hp_I)$.
  \end{enumerate}
\end{lemma}

\begin{proof} $\;$
  \begin{enumerate}
  \item Let $I'$ be the expansion of $I$ at $g$.  For all $d \ge \deg
    g$, Lemma~\ref{lem:SSproperties}(ii) shows that the only monomial
    in $I_d \setminus I'_d$ is $g x_n^{d - \deg g}$, so that
    $\hf_{I'}(d) = 1 + \hf_I(d)$.  Hence, we have $\hp_{I'} = 1 +
    \hp_I$.

  \item Let $S = \kk[x_0, x_1, \dotsc, x_n]$ and $J = I \cap \kk[x_0,
    x_1, \dotsc, x_{n-1}]$.  Because $I$ is saturated, $x_n$ is not a
    zero-divisor and
  \[ 
  0 \longrightarrow \left( S/I \right)(-1) \longrightarrow S/I
  \longrightarrow \kk[x_0, x_1, \dotsc, x_{n-1}] / J \longrightarrow 0
  \]
  is a short exact sequence.  The Hilbert function of $J$ now
  satisfies $\hf_{J}(i) = \hf_I(i) - \hf_I(i-1)$, for all $i \in \ZZ$.
  Hence, by saturating $J$ with respect to $\langle x_0, x_1, \dotsc,
  x_{n-1} \rangle$, we find that $\hp_{\nabla \left( I \right)}(t) =
  \hp_I(t) - \hp_I(t-1) = \left[ \nabla (\hp_I) \right](t)$.

  \item The ideal $\lift ( I )$ is saturated strongly stable by
    Lemma~\ref{lem:SSproperties}(i),(iii) with no minimal monomial
    generators divisible by $x_n$.  Thus, $\nabla \bigl( \lift ( I )
    \bigr) = I$.  Part (ii) shows that $\nabla (\hp_{\lift ( I )}) =
    \hp_I$, so that $\lift \nabla (\hp_{\lift ( I )}) = \lift
    (\hp_I)$.  Lemma~\ref{lem:liftproperties}(iii) then shows
    $\hp_{\lift ( I )} - \lift \nabla (\hp_{\lift ( I )}) \in
    \NN$. \qedhere
  \end{enumerate}
\end{proof}

\begin{table}
  \caption{Summary of Basic Operations}
  \centering
  {\renewcommand{\arraystretch}{1.5}
  \begin{tabular}{|p{.42\textwidth}|p{.53\textwidth}|}
    \hline
    
    \multirow{2}{*}{\parbox{.4\textwidth}{$\relphantom{www}$
        \\ Hilbert polynomial $\hp$ with partitions \\ $(b_1, b_2,
        \ldots, b_r)$ and $(e_0, e_1, \ldots, e_d)$}} &

    $\plus(\hp) = 1+\hp$ with partitions $(b_1, b_2, \ldots, b_r, 0)$
    and $(e_0+1, e_1, e_2, \ldots, e_d)$ \\ \cline{2-2}

    & $\lift(\hp)$ with partitions $(b_1+1, b_2+1, \ldots, b_r+1)$ and
    $(e_0, e_0, e_1, \ldots, e_d)$ \\ \hline

    \multirow{2}{*}{lex ideal $L^{\hp}_n = L(a_0, a_1, \ldots,
      a_{n-1})$} &
    
    $\plus(L^{\hp}_n) = L^{\plus(\hp)}_n = L(a_0+1, a_1, a_2, \ldots,
    a_{n-1})$ \\ \cline{2-2}

    & $\lift (L^{\hp}_n) = L^{\lift(\hp)}_{n+1} = L(0, a_0, a_1,
    \ldots, a_{n-1})$ \\ \hline
    
    \multirow{2}{*}{saturated str.\ st.\ ideal $I$ with $\hp_I = \hp$} &

    expansion $I'$ of $I$ with $\hp_{I'} = \plus(\hp)$ \\ \cline{2-2}

    & extension $\lift(I) = I \cdot \kk[x_0, x_1, \ldots, x_{n+1}]$
    \\ \hline
  \end{tabular}}
\end{table}

The heart of Reeves' algorithm \cite{Reeves--1992} is the following.

\begin{theorem}
  \label{thm:algorithm}
  If $I$ is a saturated strongly stable ideal of codimension $c$, then
  there is a finite sequence $I_{(0)}, I_{(1)}, \ldots, I_{(i)}$ such
  that $I_{(0)} = \langle 1 \rangle = \kk[x_0, x_1, \dotsc, x_c]$,
  $I_{(i)} = I$, and $I_{(j)}$ is an expansion or extension of
  $I_{(j-1)}$, for all $1 \le j \le i$.
\end{theorem}

\begin{proof}
  See \cite[Theorem~3.20]{Moore--2012} or
  \cite[Theorem~4.4]{Moore--Nagel--2014}.
\end{proof}

When $I \neq \langle 1 \rangle$, the first step is always to expand
$\langle 1 \rangle$ to $\langle x_0 , x_1, \ldots, x_{c-1} \rangle$.
We will start after this step, as we assume $\hp \neq 0$.  The
sequences of expansions and extensions are not generally unique, but
Theorem~\ref{thm:hilbtree} shows that they are for lexicographic
ideals.  Theorem~\ref{thm:algorithm} leads to the following algorithm.
The original is in \cite[Appendix~A]{Reeves--1992}, but we follow
\cite{Moore--2012, Moore--Nagel--2014}; see also
\cite[Section~5]{Cioffi--Lella--Marinari--Roggero--2011}.

\begin{algorithm}
  \label{alg:SSS}
  \qquad
  \begin{tabbing}
    \qquad \= \qquad \= \qquad \= \kill
    \textrm{Input:} an admissible Hilbert polynomial $\hp \in \QQ[t]$ and
    $n \in \NN$ satisfying $n > \deg \hp$ \\

  \textrm{Output:} all saturated strongly stable ideals with Hilbert
  polynomial $\hp$ in $\kk[x_0, x_1, \dotsc, x_n]$ \\[0.4em]
    
    $j = 0$; $d = \deg \hp$; \\[0.1em] 

    $\hq_0 = \nabla^d(\hp)$; $\hq_{1} = \nabla^{d-1} (\hp)$; \ldots
    ; $\hq_{d-1} = \nabla^{1} (\hp)$; $\hq_d = \hp$; \\[0.1em]

    $\mathcal{S} = \{ \langle x_0, x_1, \ldots, x_{n-d-1} \rangle \}$,
  where $\langle x_0, x_1, \ldots, x_{n-d-1} \rangle \subset\kk[x_0,
    x_1, \dotsc, x_{n-d}]$; \\[0.2em]

    \textrm{WHILE} $j \le d$ \textrm{DO} \\

    \>  $\mathcal{T} = \varnothing$; \\

    \>  \textrm{FOR} $J \in \mathcal{S}$, considered as an ideal in
    $\kk[x_0, x_1, \dotsc, x_{n-d+j}]$ \textrm{DO} \\

    \> \>    \textrm{IF} $\hq_j - \hp_J \ge 0$ \textrm{THEN} \\

    \> \> \> compute all sequences of $\hq_j - \hp_J$ expansions that
    begin with $J$; \\
    \> \> \> $\mathcal{T} = \mathcal{T} \cup \text{ the resulting set of
      sat.\ str.\ st.\ ideals with Hilbert polynomial } \hq_j$; \\

    \>  $\mathcal{S} = \mathcal{T}$; \\
    \> $j = j+1$; \\

    \textrm{RETURN} $\mathcal{S}$
  \end{tabbing}
\end{algorithm}

\begin{proof}
  See \cite[Algorithm~3.22]{Moore--2012}.  Here, $\mathcal{S}$ is
  reset at the $j$th step to Moore's $\mathcal{S}^{(d-j)}$.
\end{proof}

\begin{example}
  \label{eg:algorithm}
  To compute all codimension $2$ saturated strongly stable ideals with
  Hilbert polynomial $\hp(t) = 3t + 1$, we first compute $\nabla(\hp)
  = 3$.  We produce all length $2$ sequences of expansions beginning
  at $\langle x_0, x_1 \rangle \subset \kk[x_0, x_1, x_2]$.  The only
  expandable generator of $\langle x_0, x_1 \rangle$ is $x_1$, with
  expansion $\langle x_0, x_1^2 \rangle$.  Both $x_0$ and $x_1^2$ are
  expandable in $\langle x_0, x_1^2 \rangle$, with expansions $\langle
  x_0^2, x_0 x_1, x_1^2 \rangle, \langle x_0, x_1^3 \rangle \subset
  \kk[x_0, x_1, x_2]$.  Extending each of these to $\kk[x_0, x_1, x_2,
    x_3]$, their Hilbert polynomials are $3t + 1$ and $3t$,
  respectively.  Thus, we make all possible expansions of $\langle
  x_0, x_1^3 \rangle$; expansion at $x_0$ gives $\langle x_0^2, x_0
  x_1, x_0 x_2, x_1^3 \rangle$, and expansion at $x_1^3$ gives
  $\langle x_0, x_1^4, x_1^3 x_2 \rangle$.  Hence, there are three
  codimension $2$ saturated strongly stable ideals with Hilbert
  polynomial $3t + 1$ namely, $\langle x_0^2, x_0 x_1, x_1^2 \rangle$,
  $\langle x_0^2, x_0 x_1, x_0 x_2, x_1^3 \rangle$, and $\langle x_0,
  x_1^4, x_1^3 x_2 \rangle$ in $\kk[x_0, x_1, x_2, x_3]$.
\end{example}

\section{K-Polynomials and Climbing Trees}
\label{ch:Kpolynomials}

The goal of this section is to understand where Hilbert functions and
Hilbert polynomials of saturated strongly stable ideals coincide.
Theorem~\ref{thm:Kpolydegree} states that among the saturated strongly
stable ideals with a fixed codimension and Hilbert polynomial, the
degree of the $K$-polynomial of the lexicographic ideal is
\emph{strictly} the largest.  The proof tracks the genesis of minimal
monomial generators as Algorithm~\ref{alg:SSS} traces the path from
$1$ to $\hp$ in $\Macaulaytree$.  Proposition~\ref{prop:Kpolycommence}
identifies where the inequality first occurs and
Proposition~\ref{prop:Kpolypersist} shows that it persists.

The \emph{\bfseries Hilbert series} of a finitely generated graded
$\kk[x_0, x_1, \dotsc, x_n]$-module $M$ is the formal power series
$\hs_M( T ) = \sum_{i \in \ZZ} \hf_{M}(i) \, T^i \in \ZZ[T^{-1}][\![ T
  ]\!]$.  The Hilbert series of $M$ is a rational function $\hs_M( T )
= (1-T)^{-n-1} \hk_M( T )$ and the \emph{\bfseries $K$-polynomial} of
$M$ is the numerator $\hk_M$, possibly divisible by $1-T$, of $\hs_M$;
see \cite[Theorem~8.20]{Miller--Sturmfels--2005}.  For a quotient by a
homogeneous ideal $I$, we use the notation $\hs_I = \hs_{\kk[x_0, x_1,
    \dotsc, x_n]/I}$ and $\hk_I = \hk_{\kk[x_0, x_1, \dotsc, x_n]/I}$.

We consider a fundamental example.

\begin{example}
  \label{eg:Kpoly}
  If $S = \kk[x_0, x_1, \dotsc, x_n]$, then we have $\hk_{S}(T) = 1$,
  as $\hs_{S}(T) = (1 - T)^{-n-1}$.  If $d \in \NN$, then we have
  $\hs_{S(-d)}(T) = (1 - T)^{-n-1} T^d$ and $\hk_{S(-d)}(T) = T^d$.
\end{example}

The following well-known lemma is useful.  As before, $G(I)$ denotes
the minimal set of monomial generators of a monomial ideal $I$.

\begin{lemma}
  \label{lem:Kpolydefbnd}
  Let $I \subseteq \kk[x_0, x_1, \dotsc, x_n]$ be a strongly stable
  ideal.
  \begin{enumerate}
  \item We have $\hk_{I}(T) = 1 - \sum_{g \in G(I)} T^{\, \deg g} (1 -
    T)^{\max g}$.
    
  \item We have $\deg \hk_{I} \le \max_{g \in G(I)} \bigl\{ \deg g +
    \max g \bigr\}$.
  \end{enumerate}
\end{lemma}

\begin{proof} $\;$
  \begin{enumerate}
  \item This follows by Lemma~\ref{lem:SSproperties}(ii) and
    Example~\ref{eg:Kpoly}; also see
    \cite[Proposition~2.12]{Miller--Sturmfels--2005}.
  
  \item This follows immediately from (i).  \qedhere
  \end{enumerate}
\end{proof}

Let $\deg \hs_{I} := \deg \hk_{I} - n-1$.  The next lemma establishes
that $\deg \hs_{I}$ is the maximal value where $\hf_I$ and $\hp_I$
differ.

\begin{lemma} 
  \label{lem:Kpolycoincide}
  Let $I \subseteq \kk[x_0, x_1, \dotsc, x_n]$ be a homogeneous ideal
  with rational Hilbert series
  $\hs_I( T ) = \sum_{i \in \ZZ} \hf_I(i) \, T^i = \hk_I( T
  )(1-T)^{-n-1}$
  and Hilbert polynomial $\hp_I$.  We have $\hf_I(i) = \hp_I(i)$ for
  all $i > \deg \hs_I$, while $\hf_I(i) \ne \hp_I(i)$ for
  $i = \deg \hs_I$.
\end{lemma}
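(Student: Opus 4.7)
The plan is to expand the rational Hilbert series and compare its coefficients with the polynomial $\hp_I$ directly. Write $\hk_I(T) = \sum_{j=0}^{d} c_j T^j$ with $d := \deg \hk_I$ and $c_d \ne 0$, so that $\deg \hs_I = d - (n+1)$. Multiplying by $(1-T)^{-(n+1)} = \sum_{k \ge 0} {n + k \choose n} T^k$ and extracting the coefficient of $T^i$ yields
\[
\hf_I(i) = \sum_{j = 0}^{\min(i,d)} c_j {n + i - j \choose n},
\]
while the uniqueness of the Hilbert polynomial identifies $\hp_I(t) = \sum_{j=0}^d c_j {n + t - j \choose n}$, where the binomial coefficient is now the polynomial in $t$ defined in the Remark near the start of Section~\ref{ch:Macaulaytree}.

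I would then split $\hp_I(i) - \hf_I(i)$ into three regimes indexed by $j$. For $j \le i$, the polynomial ${n+t-j \choose n}$ evaluated at $t = i$ coincides with the integer binomial ${n+i-j \choose n}$, so these terms cancel with the corresponding terms of $\hf_I(i)$. For $i < j \le i+n$, the polynomial has $t = i$ as a root (its roots being $j-n, j-n+1, \dotsc, j-1$), so such terms contribute $0$. For $j > i+n$, the identity ${-m \choose n}_{\text{poly}} = (-1)^n {m+n-1 \choose n}$, applied with $m = j - i - n \ge 1$, gives ${n+i-j \choose n}_{\text{poly}} = (-1)^n {j - i - 1 \choose n}$. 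Combining the three cases produces
\[
\hp_I(i) - \hf_I(i) = (-1)^n \sum_{j = i+n+1}^{d} c_j {j - i - 1 \choose n}.
\]

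This sum is empty precisely when $i + n + 1 > d$, that is, when $i > d - n - 1 = \deg \hs_I$, which proves the first assertion. At $i = \deg \hs_I$, the sum contains only the term $j = d$, contributing $(-1)^n c_d {n \choose n} = (-1)^n c_d$, and this is nonzero since $c_d$ is the leading coefficient of $\hk_I$. The main obstacle is keeping the bookkeeping of integer versus polynomial binomial coefficients straight across the three $j$-regimes and, in particular, verifying that at the boundary $i = \deg \hs_I$ the cancellation leaves precisely the single term $(-1)^n c_d$.
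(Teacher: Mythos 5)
Your proposal is correct and complete. The paper does not actually write out a proof of this lemma --- it declares the statement straightforward and defers to Kemper's \emph{Corollary 11.10} --- and your coefficient-extraction argument is exactly the standard computation that reference encapsulates: expanding $(1-T)^{-(n+1)}$, identifying $\hp_I(t) = \sum_{j} c_j \binom{n+t-j}{n}$ by uniqueness of the Hilbert polynomial, and isolating the discrepancy $(-1)^n \sum_{j=i+n+1}^{d} c_j \binom{j-i-1}{n}$, whose single surviving term $(-1)^n c_d$ at $i = \deg \hs_I$ is nonzero. The bookkeeping across the three $j$-regimes (cancellation for $j \le i$, vanishing at a root for $i < j \le i+n$, upper negation for $j > i+n$) is handled correctly, so there is no gap.
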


\begin{proof}
  Let $\hk_I(T) = \sum_{k=0}^d c_k T^k \in \ZZ[T]$.  Expanding
  $(1-T)^{-n-1}$ and gathering terms yields
  \[
  \hs_I(T) = \sum_{i \in \NN} \left( c_0 \binom{n+i}{n} + c_1
  \binom{n+i-1}{n} + \dotsb + c_d \binom{n+i-d}{n} \right) T^i.
  \]
  Thus, the Hilbert function equals $\hf_I(i) = c_0 \binom{n+i}{n} +
  c_1 \binom{n+i-1}{n} + \dotsb + c_d \binom{n+i-d}{n}$, \emph{for
    all} $i \in \ZZ$, and the Hilbert polynomial is $\hp_I(t) = c_0
  \binom{t + n}{n} + c_1 \binom{t + n-1}{n} + \dotsb + c_d \binom{t +
    n-d}{n}$.  Following our convention in Remark~\ref{rmk:binconv},
  the equality $\binom{n+i-j}{n} = \left. \binom{t + n-j}{n}
  \right\rvert_{t=i}$ holds if and only if $i \ge -n+j$.  This implies
  that $\hf_I(i) = \hp_I(i)$ whenever $i \ge -n+d = 1 + \deg \hs_I$,
  proving the first statement.  To finish, set $i = d - n-1 = \deg
  \hs_I$ and compare the value
  \begin{align*}
    \hp_I(i) &= \sum_{j=0}^d c_j \left. \binom{t + n-j}{n}
               \right\rvert_{t=i} = \sum_{j=0}^{d-1} c_j
               \left. \binom{t + n-j}{n} \right\rvert_{t=i} + c_d
               \left. \binom{t + n-d}{n} \right\rvert_{t=i} \\
             &= \sum_{j=0}^{d-1} c_j \left. \binom{t + n-j}{n}
               \right\rvert_{t=i} + c_d (-1)^n 
  \end{align*}
  with the value
  $\hf_I(i) = \sum_{j=0}^{d-1} c_j \binom{i+n-j}{n} + c_d \cdot 0$.
  As $c_d \ne 0$, we are finished.
\end{proof}

In the next two propositions, let $L^{\hp}_n = \langle m_1, m_2,
\dotsc, m_{n-\ell-1} \rangle$, as in Lemma~\ref{lem:lexmingens}.
These propositions examine the behaviour of $\deg \hk_I$ for saturated
strongly stable ideals $I$.

\begin{proposition} 
  \label{prop:Kpolycommence}
  Let $L^{\hp}_n \subset \kk[x_0, x_1, \dotsc , x_n]$ be any
  lexicographic ideal.
  \begin{enumerate}
  \item If $m \in G(L^{\hp}_n)$ is a minimal monomial generator, then
    $m$ is expandable if and only if $m$ is the smallest minimal
    monomial generator of its degree. 
    
  \item Let $m_{n - \ell - 1}$ denote the last minimal monomial
    generator of $L^{\hp}_n$.  If $m \neq m_{n - \ell - 1}$ is any
    other expandable generator of $L^{\hp}_n$ and $(L^{\hp}_n)'$ is
    the expansion of $L^{\hp}_n$ at $m$, then every minimal monomial
    generator $g \in G((L^{\hp}_n)')$ satisfies $\deg g < 1 + \deg
    m_{n - \ell - 1}$.
    
  \item Moreover, in (ii), we have $\deg \hk_{L^{\plus (\hp)}_n} >
    \deg \hk_{(L^{\hp}_n)'}$.
\end{enumerate}
\end{proposition}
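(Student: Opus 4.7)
The plan is to establish (i), (ii), and (iii) sequentially using the explicit generator list of lexicographic ideals from Lemma~\ref{lem:lexmingens} and the $K$-polynomial formula in Lemma~\ref{lem:Kpolydefbnd}. I would order the minimal generators of $L^{\hp}_n$ in decreasing lex-order as $m_1 \lexg m_2 \lexg \dotsb$ and note that their degrees are non-decreasing in this order. Computing the consecutive quotient $m_{n-d+k+1}/m_{n-d+k} = x_{n-(d+1)+k+1}^{a_{d-k-1}+1} x_{n-(d+1)+k}^{-1}$, I would conclude that two same-degree consecutive generators differ precisely by a single swap $x_i \mapsto x_{i+1}$ with $i := n-(d+1)+k$ satisfying $0 \le i < n-1$, which occurs exactly in the case $a_{d-k-1} = 0$. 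This structural fact is the engine of the whole argument.

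For Part~(i), if $m$ is not the smallest generator of its degree, then the next minimal generator in the lex-ordered list has the same degree and equals $m \, x_{i+1} x_i^{-1}$ for a valid index $i$, so $m$ is not expandable. Conversely, if $m$ is the smallest of its degree, then any $m \, x_{i+1} x_i^{-1}$ with $x_i \mid m$ and $i < n-1$ is strictly lex-smaller than $m$ of the same degree; were it a minimal generator it would contradict the hypothesis, so $m$ is expandable. The case $a_0 \neq 0$ treated by Lemma~\ref{lem:lexmingens}(i) is handled identically using the final generator $x_{n-(d+1)}^{a_d} \dotsb x_{n-1}^{a_0}$. For Part~(ii), Part~(i) combined with the non-decreasing degree structure and the fact that $m_{n-(\ell+1)}$ is the last (hence maximal-degree) generator forces $\deg m < \deg m_{n-(\ell+1)}$, since otherwise $m_{n-(\ell+1)}$ would be strictly smaller than $m$ of the same degree. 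The expansion $(L^{\hp}_n)'$ has minimal generators $(L^{\hp}_n \setminus \{m\}) \cup \{m x_j : \max m \le j \le n-1\}$, where the new monomials are pairwise incomparable and neither divide nor are divided by the other minimal generators; the old generators retain $\deg \le \deg m_{n-(\ell+1)}$ and the new ones have $\deg m + 1 \le \deg m_{n-(\ell+1)}$, so every minimal generator $g$ of $(L^{\hp}_n)'$ satisfies $\deg g \le \deg m_{n-(\ell+1)} < 1 + \deg m_{n-(\ell+1)}$.

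For Part~(iii), Lemma~\ref{lem:lexexpansion} and Example~\ref{eg:lexexpansion} identify $L^{\plus(\hp)}_n = \plus(L^{\hp}_n)$ with the expansion of $L^{\hp}_n$ at $m_{n-(\ell+1)}$. Among its minimal generators, $m_{n-(\ell+1)} x_{n-1}$ uniquely maximizes $\deg g + \max g$ at the value $\deg m_{n-(\ell+1)} + n$: the other new generators have $\max \le n-2$, and the old generators have $\deg g + \max g$ strictly less than $\deg m_{n-(\ell+1)} + \max m_{n-(\ell+1)} \le \deg m_{n-(\ell+1)} + n - 1$ (since $m_{n-(\ell+1)}$ itself uniquely attains this maximum within $L^{\hp}_n$, a fact verified by the same consecutive-quotient computation). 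As this maximum is attained uniquely, no cancellation occurs in Lemma~\ref{lem:Kpolydefbnd}(i) and $\deg \hk_{L^{\plus(\hp)}_n} = \deg m_{n-(\ell+1)} + n$. On the other hand, $(L^{\hp}_n)'$ is saturated Borel, so Lemma~\ref{lem:SSproperties}(iii) combined with Part~(ii) gives $\deg g + \max g \le \deg m_{n-(\ell+1)} + (n-1)$ for every minimal generator $g$; Lemma~\ref{lem:Kpolydefbnd}(ii) then produces $\deg \hk_{(L^{\hp}_n)'} \le \deg m_{n-(\ell+1)} + n - 1 < \deg \hk_{L^{\plus(\hp)}_n}$.

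The main obstacle is the careful bookkeeping in Part~(i): one must verify, using the explicit generator formulas, that a swap from a minimal generator $m$ either lands on the very next minimal generator in the lex-ordered list (when $m$ is not smallest of its degree) or produces no minimal generator at all (when $m$ is smallest). Once this equivalence is established, Parts~(ii) and~(iii) follow from straightforward bounds on the degree and maximum variable index of the generators.
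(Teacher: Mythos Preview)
Your proof is correct and follows essentially the same approach as the paper's own proof: both use the explicit generator list from Lemma~\ref{lem:lexmingens} to verify expandability in~(i), deduce the degree bound in~(ii) from the fact that $m_{n-(\ell+1)}$ has maximal degree, and combine Example~\ref{eg:lexexpansion} with Lemma~\ref{lem:Kpolydefbnd} for~(iii). Your treatment is in fact more thorough in places --- you prove both directions of~(i) explicitly via the consecutive-quotient computation (the paper handles only one direction and leaves the rest ``by inspection''), and you justify the non-cancellation needed for the equality $\deg \hk_{L^{\plus(\hp)}_n} = \deg m_{n-(\ell+1)} + n$, which the paper asserts without comment.
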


\begin{proof} $\;$
  \begin{enumerate}
  \item By inspection, if $\deg m_j = \deg m_{j+1}$ holds, then $m_j$
    is not expandable.

  \item By (i), we have $\deg m < \deg m_{n-\ell-1}$.  But the minimal
    generators of $(L^{\hp}_n)'$ are
    \[ 
    G((L^{\hp}_n)') = \bigl( \{ m_1, m_2, \dotsc, m_{n-\ell-1} \}
    \setminus \{ m \} \bigr) \cup \bigl\{ m x_{\max m}, m x_{\max m
      +1}, \dotsc, m x_{n-1} \bigr\},
    \]
    and $\deg m_j$ is maximized at $j = n-\ell-1$, which gives the
    inequality.

  \item We know $L^{\plus (\hp)}_n = \langle m_1, m_2, \dotsc,
    m_{n-\ell-2}, m_{n-\ell-1} x_{n-\ell-2}, m_{n-\ell-1}
    x_{n-\ell-1}, \dotsc, m_{n-\ell-1} x_{n-1} \rangle$, by
    Example~\ref{eg:lexexpansion}.  As $\deg m_j$ is maximized at
    $m_{n-\ell-1}$, we get $\deg \hk_{L^{\plus (\hp)}_n} = \deg
    m_{n-\ell-1} + n$.  Then (ii) and Lemma~\ref{lem:Kpolydefbnd}(ii)
    yield the desired inequality. \qedhere
  \end{enumerate}
\end{proof}

Proposition \ref{prop:Kpolypersist} explains how the $K$-polynomial
inequality in Proposition~\ref{prop:Kpolycommence} persists.

\begin{proposition} 
  \label{prop:Kpolypersist}
  Let $I \subset \kk[x_0, x_1, \dotsc , x_n]$ be a saturated strongly
  stable ideal, $\hp = \hp_I$, and $m_{n-\ell-1}$ be the last minimal
  generator of $L^{\hp}_n$.  Consider the following condition on $I$:
  \[
  \text{all generators $g \in G(I)$ satisfy $\deg g < \deg
    m_{n-\ell-1}$ and $\max g \le \max m_{n-\ell-1}$.}
  \tag{$\star$} \label{star}
  \]
  If $I$ satisfies (\ref{star}), then the following are true:
  \begin{enumerate}
  \item $\deg \hk_{L^{\hp}_n} > \deg \hk_I$, or equivalently, $\deg
    \hs_{L^{\hp}_n} > \deg \hs_I$;

  \item if $I'$ denotes any expansion of $I$, then $I'$ satisfies
    (\ref{star}) with respect to $L^{\plus (\hp)}_n$;

  \item the extension $\lift ( I )$ satisfies (\ref{star}) with
    respect to $L^{\hp_{\lift (I)}}_{n+1}$; and

  \item if $I_{(0)}, I_{(1)}, \dotsc, I_{(i)}$ is any finite sequence
    such that $I_{(0)} = I$ and $I_{(j)}$ is an expansion or extension
    of $I_{(j-1)}$, for all $0 < j \le i$, then $I_{(i)}$ satisfies
    (\ref{star}).
  \end{enumerate}
\end{proposition}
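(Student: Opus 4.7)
The plan is to establish (i), (ii), (iii) separately and then deduce (iv) by a straightforward induction on the length of the binary sequence.

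For (i), I would first sharpen Lemma~\ref{lem:Kpolydefbnd}(ii) into an equality for the lexicographic ideal itself, namely $\deg \hk_{L^{\hp}_n} = \deg m_{n-(\ell+1)} + \max m_{n-(\ell+1)}$. Writing the minimal generators of $L^{\hp}_n$ via Lemma~\ref{lem:lexmingens}(ii), a short computation shows that the quantity $\deg g + \max g$ is strictly maximized by the last generator $m_{n-(\ell+1)}$: passing from $m_{n-d+k}$ to $m_{n-d+k+1}$ increases $\deg g + \max g$ by $a_{d-k-1} + 1 \ge 1$, and the degree-one generators $x_0,\dots,x_{n-(d+2)}$ contribute at most $n-d-1$, which lags behind. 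Hence the leading term in the expression from Lemma~\ref{lem:Kpolydefbnd}(i) does not cancel. Combining this equality with Lemma~\ref{lem:Kpolydefbnd}(ii) applied to $I$ and the inequalities in (\ref{star}) yields $\deg \hk_I \le (\deg m_{n-(\ell+1)} - 1) + \max m_{n-(\ell+1)} < \deg \hk_{L^{\hp}_n}$. The equivalent statement about $\deg \hs$ follows from $\deg \hs = \deg \hk - (n+1)$.

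For (ii), I would first identify the last minimal generator $\tilde{m}$ of $L^{\plus(\hp)}_n$. Lemma~\ref{lem:lexexpansion} together with Example~\ref{eg:lexexpansion} show that $\tilde{m} = m_{n-(\ell+1)} \cdot x_{n-1}$ in both cases $\ell \ge 0$ and $\ell = -1$ (the variables with exponent zero being absorbed), giving $\deg \tilde{m} = \deg m_{n-(\ell+1)} + 1$ and $\max \tilde{m} = n-1$. Any minimal generator of the expansion $I'$ at some expandable $g \in I$ is, by construction, either a minimal generator of $I$ other than $g$, or a monomial $g x_j$ with $\max g \le j \le n-1$. For the former class, (\ref{star}) for $I$ directly gives $\deg < \deg m_{n-(\ell+1)} < \deg \tilde{m}$ and $\max \le \max m_{n-(\ell+1)} \le n-1 = \max \tilde{m}$; for the latter, $\deg(g x_j) = \deg g + 1 \le \deg m_{n-(\ell+1)} < \deg \tilde{m}$ and $\max(g x_j) \le n-1 = \max \tilde{m}$.

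For (iii), Lemma~\ref{lem:Hpolylift} gives $\hp_{\lift(I)} = \plus^j \lift(\hp)$ for some $j \in \NN$, and Proposition~\ref{prop:lexextension} gives $\lift(L^{\hp}_n) = L^{\lift(\hp)}_{n+1}$, so as monomials the minimal generators of $L^{\lift(\hp)}_{n+1}$ coincide with those of $L^{\hp}_n$; in particular, its last minimal generator is again $m_{n-(\ell+1)}$. Each subsequent application of $\plus$ to the lex ideal only raises the degree (by $1$) and possibly the max (up to $n$) of the last minimal generator, exactly as analyzed in the proof of (ii). Thus, the last minimal generator $\tilde{m}$ of $L^{\hp_{\lift(I)}}_{n+1}$ satisfies $\deg \tilde{m} \ge \deg m_{n-(\ell+1)}$ and $\max \tilde{m} \ge \max m_{n-(\ell+1)}$. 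Since the minimal generators of $\lift(I)$ are the same monomials as those of $I$, (\ref{star}) for $I$ implies (\ref{star}) for $\lift(I)$ with respect to $L^{\hp_{\lift(I)}}_{n+1}$.

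Finally, (iv) is immediate by induction on $i$: the hypothesis supplies the base case, and each step is either an expansion (handled by (ii)) or an extension (handled by (iii)). The main obstacle is part (iii), where one must control the possibly nonzero shift $j$ in $\plus^j \lift(\hp)$ and verify that repeated applications of $\plus$ only grow the last minimal generator of the lex ideal in the directions that strengthen both inequalities in (\ref{star}); once this growth is checked, the rest of the argument is bookkeeping.
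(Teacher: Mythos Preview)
Your proposal is correct and follows essentially the same approach as the paper: part (i) via Lemma~\ref{lem:Kpolydefbnd}(ii) and (\ref{star}), part (ii) by identifying the last generator of $L^{\plus(\hp)}_n$ as $m_{n-(\ell+1)}x_{n-1}$ and checking both types of generators of $I'$, part (iii) by passing through $L^{\lift(\hp)}_{n+1}$ and then through the $\plus^j$ shift from Lemma~\ref{lem:Hpolylift}, and part (iv) by induction. Your treatment of (i) is in fact more careful than the paper's, which simply asserts the equality $\deg \hk_{L^{\hp}_n} = \deg m_{n-(\ell+1)} + \max m_{n-(\ell+1)}$ without verifying non-cancellation of the top term.
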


\begin{proof} $\;$
  \begin{enumerate}
  \item This follows immediately from Lemma~\ref{lem:Kpolydefbnd}(ii)
    and (\ref{star}).
 
  \item The condition (\ref{star}) for the expansion $I'$ becomes:
    every generator $g' \in G(I')$ satisfies $\deg g' < 1 + \deg m_{n
      - \ell - 1}$ and $\max g' \le n-1$.  Both inequalities hold, by
    definition of the minimal monomial generators of $I'$ and because
    $I$ satisfies (\ref{star}).
  
  \item An analogous condition to (\ref{star}) holds between $\lift (
    I )$ and $L^{\lift (\hp)}_n$.  Replacing $L^{\lift (\hp)}_n$ by
    $L^{\plus^j \lift (\hp)}_n$, with $j$ defined by
    Lemma~\ref{lem:HpolyBorel}(iii), results in higher degree and
    maximum index of the last minimal generator of $L^{\plus^j \lift
      (\hp)}_n$; cf.\ Example~\ref{eg:lexexpansion}.  Hence, $\lift (
    I )$ satisfies (\ref{star}).

  \item We apply induction to $i$.  The case $i = 1$ is resolved by
    (ii) and (iii).  If $i > 1$, then (ii) and (iii) ensure that
    $I_{(1)}$ satisfies (\ref{star}), and we apply the induction
    hypothesis. \qedhere
  \end{enumerate}
\end{proof}

Combining Propositions~\ref{prop:Kpolycommence} and
\ref{prop:Kpolypersist} leads to the main result of this section.

\begin{theorem} 
  \label{thm:Kpolydegree}
  Let $I \subset \kk[x_0, x_1, \dotsc , x_n]$ be a saturated strongly
  stable ideal and denote $\hp = \hp_I$.  If $I \ne L^{\hp}_n$, then
  we have $\deg \hk_{L^{\hp}_n} > \deg \hk_{I}$, or equivalently,
  $\deg \hs_{L^{\hp}_n} > \deg \hs_{I}$.
\end{theorem}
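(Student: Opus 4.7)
The plan is to apply Algorithm~\ref{alg:SSS} via Theorem~\ref{thm:algorithm} to express $I$ as the terminal ideal $I_k$ of a finite binary sequence $I_0, I_1, \dotsc, I_k = I$ of expansions and extensions, starting from $I_0 = \langle x_0, x_1, \dotsc, x_{c-1} \rangle \subset \kk[x_0, x_1, \dotsc, x_c]$, and then to combine Propositions~\ref{prop:Kpolycommence} and~\ref{prop:Kpolypersist} to extract the strict inequality.

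The first substantive step is to locate where the sequence departs from the lexicographic path. By Proposition~\ref{prop:lexextension}, the extension of a lex ideal is lex, and by Example~\ref{eg:lexexpansion}, the expansion of a lex ideal at its last minimal generator is lex. Since $I_0 = L^1_c$ is lex while $I_k = I$ is not, there is a smallest index $j^* \ge 1$ with $I_{j^*}$ non-lex. The step $I_{j^*-1} \to I_{j^*}$ must therefore be an expansion of a lex ideal $I_{j^*-1} = L^{\hq}_{n'}$ at some expandable generator strictly preceding its last minimal generator~$g_{\mathrm{last}}$.

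Applying Proposition~\ref{prop:Kpolycommence}(ii), every minimal monomial generator $g' \in I_{j^*}$ satisfies $\deg g' < 1 + \deg g_{\mathrm{last}}$. By Example~\ref{eg:lexexpansion}, $L^{\plus(\hq)}_{n'}$ is the expansion of $L^{\hq}_{n'}$ at $g_{\mathrm{last}}$, so its own last minimal generator equals $g_{\mathrm{last}}\, x_{n'-1}$, of degree $1 + \deg g_{\mathrm{last}}$ and maximum index $n'-1$. The inequality $\max g' \le n'-1$ is forced by saturation of $I_{j^*}$ via Lemma~\ref{lem:SSproperties}(iii), so $I_{j^*}$ satisfies condition $(\star)$ with respect to $L^{\hp_{I_{j^*}}}_{n'} = L^{\plus(\hq)}_{n'}$. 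Invoking Proposition~\ref{prop:Kpolypersist}(iv), condition $(\star)$ propagates through the remaining steps $I_{j^*+1}, \dotsc, I_k$, so $I$ satisfies $(\star)$ with respect to $L^{\hp}_n$, and part~(i) of the same proposition then yields $\deg \hk_{L^{\hp}_n} > \deg \hk_I$; the equivalent inequality $\deg \hs_{L^{\hp}_n} > \deg \hs_I$ follows since both series are rational functions over the common denominator $(1-T)^{n+1}$.

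The main obstacle is producing $(\star)$ at the deviation step $j^*$: both the degree clause (supplied by Proposition~\ref{prop:Kpolycommence}(ii)) and the maximum-index clause (supplied by saturation) must be matched against the last minimal generator $g_{\mathrm{last}}\, x_{n'-1}$ of the \emph{new} lex target $L^{\plus(\hq)}_{n'}$, rather than against $g_{\mathrm{last}}$ itself in $L^{\hq}_{n'}$. Once this seed is established, Proposition~\ref{prop:Kpolypersist}(iv) carries condition $(\star)$ all the way to $I$ and part~(i) closes the argument.
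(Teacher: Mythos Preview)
Your proof is correct and follows essentially the same approach as the paper's: locate the first deviation from the lexicographic path in a generating sequence from Theorem~\ref{thm:algorithm}, use Proposition~\ref{prop:Kpolycommence}(ii) to seed condition~$(\star)$ at that step, propagate via Proposition~\ref{prop:Kpolypersist}(iv), and conclude with Proposition~\ref{prop:Kpolypersist}(i). Your version is slightly more explicit than the paper's in verifying the $\max$-index clause of $(\star)$ at the deviation step (via Lemma~\ref{lem:SSproperties}(iii) and the identification of the new last generator as $g_{\mathrm{last}}\,x_{n'-1}$), whereas the paper simply asserts that Proposition~\ref{prop:Kpolycommence}(ii) yields $(\star)$ and invokes Theorem~\ref{thm:lextree} rather than Proposition~\ref{prop:lexextension} and Example~\ref{eg:lexexpansion} directly to locate the deviation.
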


\begin{proof}  
  Both $I$ and $L^{\hp}_n$ are generated by Algorithm~\ref{alg:SSS}.
  Let $c$ be their codimension and $I_{(1)}, I_{(2)}, \dotsc, I_{(i)}$
  be a finite sequence such that $I_{(1)} = \langle x_0, x_1, \dotsc,
  x_{c-1} \rangle \subset \kk[x_0, x_1, \dotsc, x_c]$, $I_{(i)} = I$,
  and $I_{(j)}$ is an expansion or extension of $I_{(j-1)}$, for all
  $1 < j \le i$.  Theorem~\ref{thm:hilbtree} implies that if $I \ne
  L^{\hp}_n$, then there is some $2 \le k \le i$ such that $I_{(j)}$
  is lexicographic, for all $1 \le j \le k-1$, but $I_{(k)}$ is not.
  By Proposition~\ref{prop:Kpolycommence}(i), $I_{(k)}$ is the
  expansion of $I_{(k-1)}$ at a minimal generator of nonmaximal
  degree.  Proposition~\ref{prop:Kpolycommence}(ii) then shows that
  $I_{(k)}$ satisfies (\ref{star}).  Applying
  Proposition~\ref{prop:Kpolypersist}(iv) to the subsequence $I_{(k)},
  I_{(k+1)}, \dotsc, I_{(i)}$ shows that $I_{(i)} = I$ satisfies
  (\ref{star}), hence, applying Proposition~\ref{prop:Kpolypersist}(i)
  finishes the proof.
\end{proof}

\begin{proof}[Proof of Theorem~\ref{thm:Kpolydegreeintro}]
  Theorem~\ref{thm:Kpolydegree} proves the claim.
\end{proof}

\begin{example}
  \label{eg:Kpolydegree}
  Example~\ref{eg:algorithm} shows the saturated strongly stable
  ideals in $\kk[x_0, x_1, x_2, x_3]$ with Hilbert polynomial $3t+1$
  are $\langle x_0^2, x_0 x_1, x_1^2 \rangle$, $\langle x_0^2, x_0
  x_1, x_0 x_2, x_1^3 \rangle$, and $L^{3t+1}_3 = \langle x_0, x_1^4,
  x_1^3 x_2 \rangle$.  Lemma~\ref{lem:Kpolydefbnd}(i) yields $\deg
  \hk_{L^{3t+1}_3} = 6$, $\deg \hk_{\langle x_0^2, x_0 x_1, x_0 x_2,
    x_1^3 \rangle} = 3$, and $\deg \hk_{\langle x_0^2, x_0 x_1, x_1^2
    \rangle} = 3$.
\end{example}

\begin{corollary}
  \label{cor:Kpolydegree}
  Let $I \subset \kk[x_0, x_1, \dotsc , x_n]$ be a saturated strongly
  stable ideal and let $\hp = \hp_I$.  If $I \ne L^{\hp}_n$, then
  there exists $k \in \ZZ$ such that $\hf_I(j) = \hp(j)$, for all $j
  \ge k$, but $\hf_{L^{\hp}_n}(k) \ne \hp(k)$.
\end{corollary}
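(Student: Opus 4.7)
The plan is to set $k := \deg \hs_{L^{\hp}_n}$ and derive both conclusions directly from Theorem~\ref{thm:Kpolydegree} and Lemma~\ref{lem:Kpolycoincide}. Since $I \ne L^{\hp}_n$ by hypothesis, Theorem~\ref{thm:Kpolydegree} furnishes the strict inequality $\deg \hs_{L^{\hp}_n} > \deg \hs_I$, so $k > \deg \hs_I$. The proof is thus essentially the observation that this strict inequality is exactly what is needed to force the two conclusions simultaneously: the lexicographic ideal fails to match its Hilbert polynomial at degree $k$, while $I$ has already stabilized by that degree.

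Concretely, for any $j \ge k$ we have $j > \deg \hs_I$, so Lemma~\ref{lem:Kpolycoincide} applied to $I$ gives $\hf_I(j) = \hp_I(j) = \hp(j)$, yielding the first claim. For the second claim, apply Lemma~\ref{lem:Kpolycoincide} to $L^{\hp}_n$ at $i = k = \deg \hs_{L^{\hp}_n}$ to obtain $\hf_{L^{\hp}_n}(k) \ne \hp_{L^{\hp}_n}(k) = \hp(k)$. There is no genuine obstacle; all the technical work was done in Theorem~\ref{thm:Kpolydegree}, and the corollary is simply its translation from $K$-polynomial degrees to Hilbert function values via Lemma~\ref{lem:Kpolycoincide}.
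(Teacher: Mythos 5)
Your proof is correct and takes essentially the same route as the paper: both deduce the corollary immediately from Theorem~\ref{thm:Kpolydegree} together with Lemma~\ref{lem:Kpolycoincide}, differing only in the choice of witness ($k = \deg \hs_{L^{\hp}_n}$ for you versus $k = 1 + \deg \hs_I$ in the paper). If anything, your choice is the more directly justified one, since the ``while'' clause of Lemma~\ref{lem:Kpolycoincide} guarantees $\hf_{L^{\hp}_n}(k) \ne \hp(k)$ exactly at $k = \deg \hs_{L^{\hp}_n}$.
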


\begin{proof}
  Lemma~\ref{lem:Kpolycoincide} and Theorem~\ref{thm:Kpolydegree} show
  that this is the case for $k = \deg \hs_{L^{\hp}_n}$.
\end{proof}

\section{The Ubiquity of Smooth Hilbert Schemes}
\label{ch:Hilbertirred}

The goal now is to investigate our proposed geography of Hilbert
schemes, formally described by the collection of trees $\hilbtree$, by
applying the Hilbert series inequalities of
Theorem~\ref{thm:Kpolydegree}.  Surprisingly, we recover a
not-so-well-known family of irreducible Hilbert schemes, first
discovered by Gotzmann \cite[Proposition~1]{Gotzmann--1989}.
Moreover, we observe that a complete classification of Hilbert schemes
with unique strongly stable ideals can be given by examining how
Reeves' algorithm interacts with $\hilbtree_c$.  These Hilbert schemes
are nonsingular and irreducible over algebraically closed or
characteristic $0$ fields, and under natural probability distributions
on the trees $\hilbtree_c$ occur with probability at least $0.5$.

The next two lemmas are used to prove the classification
Theorem~\ref{thm:SSSunique2}.

\begin{lemma}
  \label{lem:Hfunctionineq}
  Let $I \subset \kk[x_0, x_1, \dotsc, x_n]$ be a homogeneous ideal,
  and let $\hp = \hp_I$.
  \begin{enumerate}
  \item We have $\hf_I(i) \ge \hf_{L^{\hp}_n}(i)$, for all $i \in
    \ZZ$, where $L^{\hp}_n$ is the corresponding lexicographic ideal.

  \item The Hilbert function of $\lift ( I ) = I \cdot \kk[x_0, x_1,
    \dotsc, x_{n+1}]$ is given by $\hf_{\lift ( I )}(i) = \sum_{0 \le
    j \le i} \hf_I(j)$.
  \end{enumerate}
\end{lemma}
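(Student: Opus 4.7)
The plan is to dispatch the two parts independently, each via a routine construction. For part (i), the key step is to route through the unsaturated lexicographic ideal $L^{\hf_I}_n$ defined just before Example~\ref{eg:lexordering}. By Macaulay's theorem, the prescription of spanning each graded piece by the $\dim_{\kk} I_i$ largest lex monomials really does define a homogeneous ideal, and one has $\hf_{L^{\hf_I}_n}(i) = \hf_I(i)$ for every $i \in \ZZ$ by construction. From there the inequality reduces to the containment $L^{\hf_I}_n \subseteq L^{\hp}_n = \bigl( L^{\hf_I}_n : \langle x_0, x_1, \dotsc, x_n \rangle^{\infty} \bigr)$, so that $\dim_{\kk} (L^{\hp}_n)_i \ge \dim_{\kk} (L^{\hf_I}_n)_i$ holds for all $i$, and reading off dimensions of graded pieces yields $\hf_{L^{\hp}_n}(i) \le \hf_{L^{\hf_I}_n}(i) = \hf_I(i)$.

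For part (ii), I would exploit that no generator of $\lift(I) := I \cdot \kk[x_0, x_1, \dotsc, x_{n+1}]$ involves $x_{n+1}$. Setting $S := \kk[x_0, x_1, \dotsc, x_n]$ and $S' := \kk[x_0, x_1, \dotsc, x_{n+1}]$, this yields a graded $\kk$-algebra isomorphism $S'/\lift(I) \cong (S/I) \otimes_{\kk} \kk[x_{n+1}]$ with $x_{n+1}$ in degree one. Decomposing the degree-$i$ component according to the power of $x_{n+1}$ then directly produces the sum $\sum_{j=0}^i \hf_I(j)$. An equivalent and perhaps cleaner route is to note that $x_{n+1}$ is a nonzerodivisor on $S'/\lift(I)$, assemble the short exact sequence
\[
0 \longrightarrow \bigl(S'/\lift(I)\bigr)(-1) \xrightarrow{\; x_{n+1}\; } S'/\lift(I) \longrightarrow S/I \longrightarrow 0,
\]
extract the recurrence $\hf_{\lift(I)}(i) = \hf_{\lift(I)}(i-1) + \hf_I(i)$, and telescope using $\hf_{\lift(I)}(i) = 0$ for $i < 0$.

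Neither step presents a genuine obstacle; the lemma is essentially bookkeeping supported by two classical inputs (Macaulay's theorem for part (i), flatness of polynomial extensions for part (ii)). The only subtlety worth flagging is that in part (i) the desired inequality points the \emph{opposite} way to the containment $L^{\hf_I}_n \subseteq L^{\hp}_n$, so one must remember to translate from dimensions of ideal graded pieces to dimensions of quotient graded pieces before drawing the conclusion.
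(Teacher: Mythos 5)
Your argument is correct and matches the paper's proof in both parts: part (i) passes through the unsaturated lexicographic ideal $L^{\hf_I}_n$ and the containment $L^{\hf_I}_n \subseteq L^{\hp}_n$ exactly as the paper does, and part (ii)'s decomposition of $\bigl(\lift(I)\bigr)_i$ by the power of $x_{n+1}$ is precisely the paper's computation (your short-exact-sequence variant is an equivalent repackaging). Nothing is missing.
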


\begin{proof} $\;$
  \begin{enumerate}
  \item Section~\ref{ch:lexforest} defines the (possibly unsaturated)
    lexicographic ideal $L^{\hf}_n \subset \kk[x_0, x_1, \dotsc, x_n]$
    for the Hilbert function $\hf = \hf_I$.  We have
    \[
    \hf(i) = \dim_{\kk} \kk[x_0, x_1, \dotsc, x_n]_i / (L^{\hf}_n)_i
    \ge \dim_{\kk} \kk[x_0, x_1, \dotsc, x_n]_i / (L^{\hp}_n)_i,
    \]
    for all $i \in \ZZ$, because $L^{\hp}_n$ contains $L^{\hf}_n$.  It
    follows that $\hf(i) \ge \hf_{L^{\hp}_n}(i)$.

  \item The homogeneous piece $\left( \lift ( I ) \right)_i$ has
    decomposition
    \[
    \bigl( \lift ( I ) \bigr)_i = \bigoplus_{j \in \NN, j \le i} I_j
    \cdot x_{n+1}^{i - j} \subset \bigoplus_{j \in \NN, j \le i}
    \kk[x_0, x_1, \dotsc, x_n]_j \cdot x_{n+1}^{i - j} = \kk[x_0, x_1,
      \dotsc, x_{n+1}]_i
    \]
    and the desired equality follows directly.  \qedhere
  \end{enumerate}
\end{proof}

\begin{lemma}
  \label{lem:expandablegens}
  Let $c > 0$, $\hp$ be an admissible Hilbert polynomial, and
  $\Lambda$ be a finite sequence of $\lift$'s and $\plus$'s.  The
  number of expandable minimal monomial generators of $L^{\Lambda
    (\hp)}_{c + \deg \Lambda (\hp)}$ is greater than or equal to the
  corresponding number for $L^{\hp}_{c + \deg \hp}$.
\end{lemma}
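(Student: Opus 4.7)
My plan is to induct on the length of $\Lambda$, reducing to proving the claim for $\Lambda$ a single $\plus$ or a single $\lift$. The crucial leverage in both cases is Proposition~\ref{prop:Kpolycommence}(i): a minimal monomial generator of a lexicographic ideal is expandable if and only if it is the smallest in lex order among minimal generators of its own degree. Consequently, for any lexicographic ideal the number of expandable minimal monomial generators equals the number of \emph{distinct} degrees occurring among its minimal generators, and the whole problem reduces to tracking how this count of distinct degrees behaves under one application of $\plus$ and under one application of $\lift$.

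For $\Lambda = \lift$, Proposition~\ref{prop:lexextension} identifies $\lift(L^{\hp}_{c + \deg \hp})$ with $L^{\lift(\hp)}_{c + \deg \hp + 1}$ and shows that the minimal monomial generators of this extension are literally the same monomials as those of $L^{\hp}_{c + \deg \hp}$, now interpreted in a polynomial ring with one extra variable. Hence the multiset of degrees is preserved, and so is the count of distinct degrees.

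For $\Lambda = \plus$, set $n := c + \deg \hp$. Lemma~\ref{lem:lexexpansion} gives $\plus(L^{\hp}_n) = L^{\plus(\hp)}_n$, and Example~\ref{eg:lexexpansion} describes the resulting ideal explicitly: if $m_1 \lexg m_2 \lexg \dotsb \lexg m_r$ are the minimal generators of $L^{\hp}_n$ listed in decreasing lex order, with (weakly increasing) degrees $d_1 \le d_2 \le \dotsb \le d_r$, then the minimal generators of $\plus(L^{\hp}_n)$ are $m_1, \dotsc, m_{r-1}$ together with $m_r x_{\mu}, m_r x_{\mu+1}, \dotsc, m_r x_{n-1}$, where $\mu := \max m_r$. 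The new generators all share the single degree $d_r + 1$. Writing $D$ and $D'$ for the sets of distinct degrees before and after, I split into two subcases: if $d_r > d_{r-1}$ (or $r = 1$), then $d_r$ vanishes from the degree set while $d_r + 1$ arrives, so $|D'| = |D|$; if $d_r = d_{r-1}$, then $d_r$ persists in $D'$ via $m_{r-1}$, and $d_r + 1$ is strictly new, so $|D'| = |D| + 1$. Either way $|D'| \ge |D|$.

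The step I expect to require the most care is confirming that the list of generators supplied by Example~\ref{eg:lexexpansion} is genuinely minimal after the expansion: one must check that none of the retained generators $m_1, \dotsc, m_{r-1}$ is divisible by any new $m_r x_j$ (immediate, because $m_r$ does not divide any $m_i$ for $i < r$) and, conversely, that no new $m_r x_j$ is divisible by some $m_i$ with $i < r$ (a short argument using minimality of $m_r$ in $L^{\hp}_n$), together with dispatching the boundary case $r = 1$ where there is no $m_{r-1}$ but the new generators still carry the single new degree $d_r + 1$. Once these routine checks are in place, iterating the two single-step statements gives the claim for arbitrary finite binary $\Lambda$.
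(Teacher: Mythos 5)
Your argument is correct and follows essentially the same route as the paper, whose entire proof is the one-line remark that the lemma ``follows from Proposition~\ref{prop:Kpolycommence}(i) and the definition of expandable.'' You have simply supplied the details the paper omits: the identification of the expandable count with the number of distinct generator degrees, its invariance under $\lift$ via Proposition~\ref{prop:lexextension}, and its monotonicity under $\plus$ via the explicit generators in Example~\ref{eg:lexexpansion}.
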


\begin{proof}
  This follows from Proposition~\ref{prop:Kpolycommence}(i) and the
  definition of expandable.
\end{proof}

We can now prove our classification result.

\begin{theorem}
  \label{thm:SSSunique2}
  Let $\hp(t) = \sum_{j=1}^r \binom{t + b_j - j+1}{b_j}$, for $b_1 \ge
  b_2 \ge \dotsb \ge b_r \ge 0$.  The lexicographic ideal is the
  unique saturated strongly stable ideal of codimension $c$ with
  Hilbert polynomial $\hp$ if and only if at least one of the
  following holds: (i) $b_r > 0$, (ii) $c \ge 2$ and $r \le 2$, (iii)
  $c = 1$ and $b_1 = b_r$, or (iv) $c = 1$ and $r - s \le 2$, where
  $b_1 = b_2 = \dotsb = b_{s} > b_{s+1} \ge \dotsb \ge b_r$.
\end{theorem}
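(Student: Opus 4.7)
The plan is a two-direction proof by strong induction on the height $r + b_1 - 1$ of $\hp$ in $\Macaulaytree$, with the inductive step splitting based on whether the last operation in the unique path from $1$ to $\hp$ is $\lift$ (when $b_r > 0$) or $\plus$ (when $b_r = 0$), per Remark~\ref{rmk:pathexpressions}. The base case $\hp = 1$ is immediate from Lemma~\ref{lem:SSproperties}.

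For the forward direction in the $\lift$-case, any saturated Borel $I$ with $\hp_I = \hp$ satisfies $I \subseteq \lift(\nabla(I))$ directly from the monomial definitions, and Lemma~\ref{lem:liftproperties}(iii) combined with the saturation of both sides forces the matching Hilbert polynomials to give $I = \lift(\nabla(I))$. The problem reduces to identifying which saturated Borels $J$ with $\hp_J = \nabla(\hp)$ lift to Hilbert polynomial exactly $\hp$ rather than $\hp + (\text{positive constant})$. My claim is that only $J = L^{\nabla\hp}_{n-1}$ lifts cleanly: for any non-lex $J$, Lemma~\ref{lem:Hfunctionineq}(i) combined with Corollary~\ref{cor:Kpolydegree} produces a degree where $\hf_J$ strictly exceeds $\hf_{L^{\nabla\hp}_{n-1}}$, and the partial-sum formula of Lemma~\ref{lem:Hfunctionineq}(ii) then forces $\hp_{\lift(J)}$ to strictly exceed $\lift(\nabla\hp)$; combining with Proposition~\ref{prop:lexextension} yields $I = L^{\hp}_n$. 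In the $\plus$-case, the listed conditions restrict $r \leq 2$ (or $r - s \leq 2$ for $c = 1$), so a direct analysis via Lemma~\ref{lem:lexmingens} shows that the final expansion step of Algorithm~\ref{alg:SSS} admits only one expandable generator on the intermediate lex ideal. The special $c = 1$ clause ``$b_1 = b_r$'' corresponds to principal ideals cut out by degree-$r$ binary forms and is handled separately.

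For the converse direction, when $b_r = 0$ with $r \geq 3$ (for $c \geq 2$) or $r - s \geq 3$ (for $c = 1$), I construct an explicit non-lex saturated Borel with Hilbert polynomial $\hp$. Under these thresholds, the relevant intermediate lex ideal along the algorithm's path acquires at least two expandable generators, because its minimal generators span at least three distinct degrees, as read off from Lemma~\ref{lem:lexmingens}. Expanding at a non-last expandable generator produces, by Proposition~\ref{prop:Kpolycommence}(iii), a Borel satisfying condition~(\ref{star}), which by Proposition~\ref{prop:Kpolypersist}(iv) persists through all subsequent extensions and expansions, yielding a non-lex saturated Borel with Hilbert polynomial $\hp$ distinct from $L^{\hp}_n$.

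The main obstacle will be the corner case $b_r = 1$ in the inductive $\lift$-step: here $\nabla(\hp)$ gains a trailing zero in its Gotzmann partition and the uniqueness hypothesis may fail for $\nabla\hp$ itself. The reduction above deliberately sidesteps this by filtering via Corollary~\ref{cor:Kpolydegree} rather than invoking induction on $\nabla\hp$, but verifying the filtering cleanly requires ensuring that the inequality $\hf_J \geq \hf_{L^{\nabla\hp}_{n-1}}$ is strict at some degree whenever $J \neq L^{\nabla\hp}_{n-1}$ — a point where two saturated Borels sharing all Hilbert-function values would have to be excluded, likely by a separate monomial-by-monomial argument using the Borel property and Lemma~\ref{lem:SSproperties}(ii). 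A secondary subtlety is reconciling the codimension-$1$ clauses, where the constant-Hilbert-polynomial case must be treated directly.
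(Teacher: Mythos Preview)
Your forward direction for $b_r > 0$ is essentially the paper's argument, just reframed: where the paper invokes Algorithm~\ref{alg:SSS} to say that every saturated Borel with Hilbert polynomial $\hp$ arises as $\lift(J)$ for some $J$ with $\hp_J = \nabla(\hp)$, you prove directly that $I = \lift(\nabla(I))$ via the containment $I \subseteq \lift(\nabla(I))$ and a Hilbert-polynomial comparison. Both then apply the same filter (Lemma~\ref{lem:Hfunctionineq} plus Corollary~\ref{cor:Kpolydegree}) to conclude that only the lexicographic $J$ survives. Your worry about the strict-inequality step is unfounded: Corollary~\ref{cor:Kpolydegree} already gives a degree $k$ where $\hf_J(k) = \hq(k) \ne \hf_{L^{\hq}_{n-1}}(k)$, and Lemma~\ref{lem:Hfunctionineq}(i) turns this into $\hf_J(k) > \hf_{L^{\hq}_{n-1}}(k)$, so no separate monomial-by-monomial exclusion is needed. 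The small $b_r = 0$ cases are direct in both your outline and the paper.

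Your converse, however, has a genuine gap. You propose expanding an early intermediate lex ideal at a non-last generator and then propagating via Proposition~\ref{prop:Kpolypersist}(iv) through the remaining extensions and expansions. But Proposition~\ref{prop:Kpolypersist}(iv) only guarantees that condition~(\ref{star}) persists; it says nothing about the resulting Hilbert polynomial. Concretely, for $c = 2$ and Gotzmann partition $(2,1,0)$, expanding $L^{t+2}_3 \subset \kk[x_0,\dotsc,x_3]$ at $x_0$ yields an ideal $I'$ with $\hp_{I'} = t+3$, and then $\hp_{\lift(I')} = \tfrac{1}{2}(t^2 + 7t + 2)$, which differs from $\hp = \tfrac{1}{2}(t^2 + 5t + 4)$ by $-t + 1$---not a nonnegative constant---so no sequence of further expansions reaches $\hp$. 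The paper avoids this by a different mechanism: it uses Lemma~\ref{lem:expandablegens} (monotonicity of the number of expandable generators along the lex path) to push the two-expandable-generator property all the way to the \emph{penultimate} lex ideal $L^{\hq}_n$, where $\hp = \plus(\hq)$. A single expansion step then produces two distinct saturated Borel ideals, both with Hilbert polynomial exactly $\hp$ by Lemma~\ref{lem:Hpolyexp}. If you reinterpret your ``relevant intermediate lex ideal'' as this penultimate one, your argument works, but then the invocation of Proposition~\ref{prop:Kpolypersist}(iv) and the phrase ``subsequent extensions and expansions'' become superfluous. (Also, two distinct degrees among minimal generators already suffice for two expandables by Proposition~\ref{prop:Kpolycommence}(i); three is not needed.)
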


\begin{proof}
  Let $b_r > 0$.  Proposition~\ref{prop:Macaulaytree} gives $\hp =
  \lift^{b_r} \plus \lift^{b_{r-1} - b_r} \dotsb \plus \lift^{b_2 -
    b_3} \plus \lift^{b_1 - b_2} (1)$, so there exists $\hq$ such that
  $\hp = \lift (\hq)$.  Saturated strongly stable ideals are generated
  by Algorithm~\ref{alg:SSS}.  The procedure is recursive and
  generates the codimension $c$ saturated strongly stable ideals with
  Hilbert polynomial $\hp$ by extending all codimension $c$ saturated
  strongly stable ideals with Hilbert polynomial $\hq = \nabla \lift
  (\hq)$ and keeping those with Hilbert polynomial $\hp$.

  By Proposition~\ref{prop:lexextension}, we have $\lift (L^{\hq}_n) =
  L^{\hp}_{n+1} \subset \kk[x_0, x_1, \dotsc, x_{n+1}]$, where $n = c
  + \deg \hq$.  It suffices to prove the following statement:

  \bigskip 
  \noindent
  \emph{If $J \subset \kk[x_0, \dotsc, x_{n}]$ is a saturated,
    strongly stable, nonlexicographic ideal, then $\hp_{\lift ( J )}
    \ne \lift (\hp_J)$.}

  \bigskip 
  \noindent 
  Let $I = \lift ( J )$ be the extension of such an ideal, $\hq =
  \hp_J$, and $\hp = \lift (\hq)$.  By
  Lemma~\ref{lem:HpolyBorel}(iii), we must show that $\hp_{I} - \hp >
  0$.  Setting $d_{\hq} = \deg \hs_{L^{\hq}_n}$, we show that
  $\hf_{I}(i) > \hf_{L^{\hp}_{n+1}}(i)$, for all integers $i \ge
  d_{\hq}$.  Lemma~\ref{lem:Hfunctionineq}(ii) implies that
  \begin{align*}
    \hf_{I}(i) &= \sum_{0 \le j \le i} \hf_J(j) = \sum_{0 \le j \le
                 d_{\hq}} \hf_J(j) + \sum_{d_{\hq} < j \le i}
                 \hf_J(j) \text{ and
                 } \\
    \hf_{L^{\hp}_{n+1}}(i) &= \sum_{0 \le j \le i}
                             \hf_{L^{\hq}_n}(j) = \sum_{0
                             \le j \le d_{\hq}} \hf_{L^{\hq}_n}(j) +
                             \sum_{d_{\hq} < j \le i} \hf_{L^{\hq}_n}(j).
  \end{align*}
  Theorem~\ref{thm:Kpolydegree} implies $\deg \hs_J < d_{\hq}$, so
  that $\sum_{d_{\hq} < j \le i} \hf_J(j) = \sum_{d_{\hq} < j \le i}
  \hq(j) = \sum_{d_{\hq} < j \le i} \hf_{L^{\hq}_n}(j)$, by
  Lemma~\ref{lem:Kpolycoincide}.  We must prove that $\sum_{0 \le j
    \le d_{\hq}} \hf_J(j) > \sum_{0 \le j \le d_{\hq}}
  \hf_{L^{\hq}_n}(j)$.  Lemma~\ref{lem:Hfunctionineq}(i) gives
  $\sum_{0 \le j \le d_{\hq}} \hf_J(j) \ge \sum_{0 \le j \le d_{\hq}}
  \hf_{L^{\hq}_n}(j)$ and strict inequality fails if and only if
  $\hf_J(j) = \hf_{L^{\hq}_n}(j)$, for all $0 \le j \le d_{\hq}$.  But
  this contradicts Corollary~\ref{cor:Kpolydegree}, so strict
  inequality holds, and $\hp_{I} - \hp > 0$.  This settles the case
  $b_r > 0$.  To prove the remaining cases, we examine
  Algorithm~\ref{alg:SSS}.
    
  Let $c \ge 2$ and $b_r = 0$.  If $r = 1$, then $\hp = 1$ and
  uniqueness holds.  If $r = 2$, then $\hp = \plus \lift^{b_1} (1)$
  and to generate saturated strongly stable ideals with codimension
  $c$ and Hilbert polynomial $\hp$, we take $b_1$ extensions from
  $L^{1}_c = \langle x_0, x_1, \dotsc, x_{c-1} \rangle$ followed by
  one expansion.  The only expandable generator is $x_{c-1}$, hence
  uniqueness holds.  If $r \ge 3$, then consider $\plus \lift^{b_{1} -
    b_{2}} (1)$ and its lexicographic ideal $ \langle x_0, x_1,
  \dotsc, x_{c-2}, x_{c-1}^2, x_{c-1}x_{c}, \dotsc, x_{c-1} x_{c + b_1
    - b_2 - 1} \rangle$ in $\kk[x_0, x_1, \dotsc, x_{c + b_1 - b_2}]$.
  As $c \ge 2$, this ideal has two expandable generators, $x_{c-2}$
  and $x_{c-1} x_{c + b_1 - b_2 - 1}$.  Lemma~\ref{lem:expandablegens}
  then implies that $ L^{\lift^{b_{r-1}} \plus \lift^{b_{r-2} -
      b_{r-1}} \dotsb \plus \lift^{b_2 - b_3} \plus \lift^{b_1 - b_2}
    (1)}_{c + b_1}$ has at least two expandable generators, which give
  distinct saturated strongly stable ideals with Hilbert polynomial
  $\hp$ and codimension $c$.

  Let $c = 1$ and $b_r = 0$.  If $b_1 = b_r$, then $\hp = r$ and to
  get codimension $1$ saturated strongly stable ideals with Hilbert
  polynomial $\hp$, we take $r - 1$ expansions from $L^1_1 = \langle
  x_0 \rangle \subset \kk[x_0, x_1]$; the possibilities are $\langle
  x_0^2 \rangle, \langle x_0^3 \rangle, \dotsc, \langle x_0^{r}
  \rangle \subset \kk[x_0, x_1]$.  Let $b_1 = b_2 = \dotsb = b_{s} >
  b_{s+1} \ge \dotsb \ge b_r$.  If $r - s = 1$, then we have $\hp =
  \plus \lift^{b_{r-1}} \plus^{r-2} (1)$ and we take $b_{r-1}$
  extensions of $\langle x_0^{r-1} \rangle \subset \kk[x_0, x_1]$,
  followed by the unique expansion of $\langle x_0^{r-1} \rangle
  \subset \kk[x_0, x_1, \dotsc, x_{1+b_{r-1}}]$.  If $r - s = 2$, then
  we have $\hp = \plus \lift^{b_{r-1}} \plus \lift^{b_{r-2} - b_{r-1}}
  \plus^{r-3} (1)$.  We extend $b_{r-2} - b_{r-1}$ times from $\langle
  x_0^{r-2} \rangle \subset \kk[x_0, x_1]$, we expand to obtain
  $\langle x_0^{r-1}, x_0^{r-2} x_1, \dotsc, x_0^{r-2} x_{b_{r-2} -
    b_{r-1}} \rangle \subset \kk[x_0, x_1, \dotsc, x_{1 + b_{r-2} -
      b_{r-1}}]$, we take $b_{r-1}$ further extensions, and we expand
  at $x_0^{r-2} x_{b_{r-2} - b_{r-1}}$.  Now suppose $r - s \ge 3$.
  Consider the polynomial $ \plus \lift^{b_{s+1} - b_{s+2}} \plus
  \lift^{b_{s} - b_{s+1}} \plus^{s-1} (1) $ obtained from $\hp$ by
  truncation, with lexicographic ideal
  \[
  \langle x_0^{s+1}, x_0^{s} x_1, \dotsc, x_0^{s} x_{b_{s} - b_{s+1} -
    1}, x_0^{s} x_{b_{s} - b_{s+1}}^2, x_0^{s} x_{b_{s} - b_{s+1}}
  x_{b_{s} - b_{s+1} + 1}, \dotsc, x_0^{s} x_{b_{s} - b_{s+1}}
  x_{b_{s} - b_{s+2}} \rangle .
  \]
  As $b_s > b_{s+1}$, both $x_0^{s} x_{b_{s} - b_{s+1} - 1}$ and
  $x_0^{s} x_{b_{s} - b_{s+1}} x_{b_{s} - b_{s+2}}$ are expandable and
  Lemma~\ref{lem:expandablegens} shows that $ L^{\lift^{b_{r-1}} \plus
    \lift^{b_{r-2} - b_{r-1}} \plus \dotsb \plus \lift^{b_2 - b_3}
    \plus \lift^{b_1 - b_2} (1)}_{1 + b_1}$ has at least two distinct
  expansions.  \qedhere
\end{proof}

\begin{remark}
  The case $b_r > 0$ is a consequence of
  Theorem~\ref{thm:Kpolydegree}.  Another approach might exist using
  Stanley decompositions; see \cite{Maclagan--Smith--2005,
    Sturmfels--White--1991, Stanley--1982}.  Indeed,
  Proposition~\ref{prop:lexextension} follows by considering a Stanley
  decomposition of the lexicographic ideal, while
  Lemma~\ref{lem:SSproperties}(ii) gives a Stanley decomposition of
  $I$.  We thank D.~Maclagan for pointing this out.
\end{remark}

A \emph{\bfseries non-standard} Borel-fixed ideal is a Borel-fixed
ideal that is not strongly stable.  Such ideals exist only in positive
characteristic.  Pardue \cite[Chapter~2]{Pardue--1994} gives the
following combinatorial criterion for $I \subset \kk[x_0, x_1, \ldots,
  x_n]$ to be Borel-fixed when $\kk$ is infinite of characteristic $p
> 0$: $I$ is monomial and for all monomials $m \in I$, if $x_j^{\ell}
\Vert m$ and $x_i \lexg x_j$, then $x_j^{-k} x_i^k m \in I$ holds, for
all $k \le_p \ell$.  Here, $x_j^{\ell} \Vert m$ means $x_j^{\ell}$
divides $m$ but $x_j^{\ell+1}$ does not; $k \le_p \ell$ means that in
the base-$p$ expansions $k = \sum_i k_i p^i, \ell = \sum_i \ell_i
p^i$, we have $k_i \le \ell_i$, for all $i$; also see
\cite[\S~15.9.3]{Eisenbud--1995}.

When $\kk$ has characteristic $0$, Theorem~\ref{thm:SSSunique2}
generalizes a result of Gotzmann.  In fact, the Hilbert schemes with
$b_r > 0$ are the irreducible ones in \cite{Gotzmann--1989}.  These
include Grassmannians and the Hilbert schemes of hypersurfaces studied
in \cite{Aadlandsvik--1985}.  We extend our classification to positive
characteristic.

\begin{corollary}
  \label{cor:SSSunique}
  Let $\kk$ be an algebraically closed field and $\hp, c$ be as in
  Theorem~\ref{thm:SSSunique2}.  Then $\hilb^{\hp}(\PP^n)$ has a
  unique Borel-fixed point, where $n = c + \deg \hp$.
\end{corollary}

\begin{proof}
  Let $I$ be the saturated Borel-fixed ideal of a point on
  $\hilb^{\hp}(\PP^n)$.  So $I$ is monomial and no $g \in G(I)$ is
  divisible by $x_n$, by \cite[II, Proposition~9]{Pardue--1994}.
  Suppose $b_r > 0$.  As $x_n$ is not a zero-divisor of $\kk[x_0, x_1,
    \ldots, x_n] / I$, the result \cite[Proposition~2]{Gotzmann--1989}
  applies directly, showing that $I = \langle f_0 x_0, f_0 f_1 x_1,
  f_0 f_1 f_2 x_2, \ldots, f_0 f_1 \cdots f_{k-1} x_{k-1}, f_0 f_1
  \cdots f_k \rangle$, where $f_i \in \kk[x_i, \ldots, x_n]$ is
  homogeneous and $k \le n-2$;
  cf.\ \cite[Theorem~4.1]{Reeves--Stillman--1997}.  We may assume each
  $f_i$ is monomial, of degree $d_i$.  Then Pardue's criterion shows
  that $I$ contains the saturated lexicographic ideal
  \[
  L := \langle x_0^{d_0 +1}, x_0^{d_0} x_1^{d_1 +1}, x_0^{d_0}
  x_1^{d_1} x_2^{d_2 +1}, \ldots, x_0^{d_0} x_1^{d_1} \cdots
  x_{k-1}^{d_{k-1} +1}, x_0^{d_0} x_1^{d_1} \cdots x_k^{d_k} \rangle.
  \]
  The degrees $d_i$ determine the Hilbert polynomials of $I$ and $L$,
  so we must have $I = L = L^{\hp}_n$.

  Let $c \ge 2$ and $b_r = 0$. The cases $\hp = 1$ and $\hp =2$ follow
  by inspection.  When $r = 2$ and $b_1 > 0$, consider the ideal $J :=
  I \cap \kk[x_0, x_1, \ldots, x_{n-1}]$, which is Borel-fixed, has
  Hilbert polynomial $\nabla(\hp)$, and satisfies $\lift(J) = I$ as
  $I$ is saturated.  By the previous cases, we know that $(J :
  x_{n-1}^{\infty}) = L^{\nabla(\hp)}_{n-1} = \langle x_0, x_1,
  \ldots, x_{c-1} \rangle$, which further implies $(I :
  x_{n-1}^{\infty}) = L^{\hp -1}_n$, by
  Lemma~\ref{lem:HpolyBorel}(iii),
  Proposition~\ref{prop:lexextension}, and \cite[II,
    Proposition~9]{Pardue--1994}.  So $I_2 \subset (L^{\hp -1}_n)_2$
  is a codimension $1$ vector subspace, the latter being spanned by
  $x_0^2, x_0 x_1, \ldots, x_{c-1}^2, x_{c-1} x_c, \ldots, x_{c-1}
  x_n$.  Now Pardue's criterion implies that we can only obtain $I_2$
  by removing $x_{c-1} x_n$, i.e.\ $I$ is the lex-expansion of $L^{\hp
    -1}_n$, as $I$ is generated in degree $r = 2$.

  Now let $c = 1$ and $b_r = 0$. If $b_1 = b_r$, then $I$ is
  lexicographic, as it is monomial and saturated.  If $b_1 > b_r$,
  then the argument is entirely analogous to the previous paragraph.
\end{proof}

\begin{proof}[Proof of Theorem~\ref{thm:SSSunique2intro}]
  Theorem~\ref{thm:SSSunique2} and Corollary~\ref{cor:SSSunique} prove
  the claim.
\end{proof}

Geometrically, the Hilbert schemes corresponding to
Theorem~\ref{thm:SSSunique2} are well-behaved.

\begin{lemma}
  \label{lem:Hilbertirred}
  Let $\kk$ be an infinite field.  If the lexicographic ideal is the
  unique saturated Borel-fixed ideal with Hilbert polynomial $\hp$ and
  codimension $c$, then $\hilb^{\hp}(\PP^n)$ is nonsingular and
  irreducible, where $n = c + \deg \hp$.
\end{lemma}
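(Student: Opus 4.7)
The plan is to leverage the action of the Borel subgroup $B \subset \operatorname{GL}_{n+1}(\kk)$ of upper triangular matrices on $\hilb^{\hp}(\PP^n)$, working in characteristic zero as signalled by the paper's preceding remarks on Borel-fixed points. Under this action, the $\kk$-points fixed by $B$ are precisely those $[I]$ with $I$ a saturated Borel ideal of codimension $c$ and Hilbert polynomial $\hp$; see \cite{Bayer_Stillman_1987, Peeva_Stillman_2005}. By hypothesis, the unique such fixed point is the lexicographic point $[L^{\hp}_n]$.

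For irreducibility, I would argue that any irreducible component $Y \subseteq \hilb^{\hp}(\PP^n)$ is $B$-invariant, since $B$ is connected and permutes the finite set of components. The component $Y$ is also complete, as $\hilb^{\hp}(\PP^n)$ is projective, so the Borel fixed point theorem provides a $B$-fixed point of $Y$.  The only candidate is the lex point, so $[L^{\hp}_n] \in Y$ for every component $Y$.  By \cite{Reeves_Stillman_1997}, $[L^{\hp}_n]$ is a nonsingular point of the Hilbert scheme, and hence lies on exactly one irreducible component.  Therefore, $\hilb^{\hp}(\PP^n)$ has exactly one irreducible component.

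For smoothness, I would consider the singular locus $\Sigma \subseteq \hilb^{\hp}(\PP^n)$, which is closed and stable under the action of $\operatorname{GL}_{n+1}(\kk)$, and in particular under $B$.  If $\Sigma$ were nonempty, the Borel fixed point theorem would again produce a $B$-fixed $\kk$-point in $\Sigma$, which must coincide with $[L^{\hp}_n]$.  Since $[L^{\hp}_n]$ is a smooth point, this is a contradiction, so $\Sigma = \varnothing$ and $\hilb^{\hp}(\PP^n)$ is nonsingular.

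The principal obstacle is pinning down the correspondence between saturated Borel ideals in the paper's sense (strongly stable ideals) and $B$-fixed points of the Hilbert scheme.  This holds cleanly in characteristic zero, where strongly stable ideals coincide with the $B$-fixed ideals; in positive characteristic, additional Borel-fixed ideals can appear that are not strongly stable, so the characteristic-zero hypothesis must be recorded, consistent with the comment in the introduction that ``Borel ideals generalize lexicographic ideals and in characteristic $0$ define Borel-fixed points on Hilbert schemes.''  Otherwise the argument is simply two applications of the Borel fixed point theorem combined with Reeves--Stillman's smoothness of the lexicographic point.
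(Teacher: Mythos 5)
Your proof is correct and reaches the same two pillars as the paper --- uniqueness of the Borel-fixed point and Reeves--Stillman nonsingularity of the lexicographic point --- but it routes both halves through the Borel fixed point theorem, whereas the paper argues differently, especially for smoothness. For irreducibility the paper simply cites Reeves' Remark~2.1 (every component and every intersection of components contains a Borel-fixed point), which is the same conclusion you extract from completeness and $B$-invariance of each component; your version makes the mechanism explicit rather than outsourcing it. For nonsingularity the paper does \emph{not} apply the fixed point theorem to the singular locus: it takes a hypothetical singular point $[X_I]$, applies a generic change of coordinates, uses Galligo/Bayer--Stillman to produce a Gr\"obner degeneration of $G\cdot I$ to a saturated Borel (hence lexicographic) ideal, and then invokes upper semicontinuity of $h^0$ of the normal sheaf along this one-parameter family to contradict smoothness of the lexicographic point. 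Your argument --- the singular locus is closed, hence complete, and $\operatorname{GL}_{n+1}$-invariant, hence $B$-invariant, so if nonempty it contains a $B$-fixed point, which must be the smooth lexicographic point --- is shorter and avoids the degeneration and semicontinuity machinery entirely; what the paper's route buys is that it works directly with the combinatorial notion of Borel ideal via the cited gin results rather than with the group action on the scheme. Both arguments share the same caveat, which you correctly flag: identifying the $B$-fixed points of $\hilb^{\hp}(\PP^n)$ with the saturated strongly stable ideals (the paper's ``Borel'' ideals) requires characteristic zero, and the paper's own proof implicitly relies on the same identification through its citations, so this is not a defect of your approach relative to the paper's.
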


\begin{proof}
  Every component and intersection of components of
  $\hilb^{\hp}(\PP^n)$ contains a point $\left[ X_I \right]$ defined
  by a saturated Borel-fixed ideal $I$; the Remarks in
  \cite[\S~2]{Reeves--1995} hold over infinite fields, by
  \cite[Proposition~1]{Bayer--Stillman--1987b} and
  \cite[Theorem~15.17]{Eisenbud--1995}.  Lexicographic points are
  nonsingular by \cite[Theorem~1.4]{Reeves--Stillman--1997}, so
  $\left[ X_{L^{\hp}_n} \right]$ cannot lie on an intersection of
  components.  Thus, $\hilb^{\hp}(\PP^n)$ has a unique, generically
  nonsingular, irreducible component.

  Suppose $\hilb^{\hp}(\PP^n)$ has a singular point, given by $I
  \subset \kk[x_0, x_1, \dotsc, x_{n}]$.  For $\gamma \in
  \operatorname{GL}_{n+1}(\kk)$, the point $\left[ X_{\gamma \cdot I}
    \right]$ is also singular, and for generic $\gamma \in
  \operatorname{GL}_{n+1}(\kk)$, the initial ideal of $\gamma \cdot I$
  with respect to any monomial ordering is Borel-fixed, by Galligo's
  Theorem \cite{Galligo--1974, Bayer--Stillman--1987b}.  Thus, a
  one-parameter family of singular points degenerating to the
  lexicographic ideal exists.  By upper semicontinuity of cohomology
  of the normal sheaf, the lexicographic ideal is singular, a
  contradiction; see \cite[III, Theorem~12.8]{Hartshorne--1977},
  \cite[Theorem~1.1(b)]{Hartshorne--2010}.  Hence,
  $\hilb^{\hp}(\PP^n)$ is nonsingular and irreducible.
\end{proof}

Thus, our classification provides irreducible, nonsingular Hilbert
schemes over algebraically closed or characteristic $0$ fields.
Moreover, these Hilbert schemes are rational, by \cite[Theorem
  C]{Lella--Roggero--2011}.

We now wish to make a quantitative statement about the prevalence of
this behaviour.  To do so, we need a probability measure $\prob \colon
2^{\hilbtree} \to [0,1]$, which is determined by a normalized
nonnegative function $\pr \colon \hilbtree \to \RR$, as in
\cite[Examples~2.8--2.9]{Billingsley--1995}.  That is, for every
subset $\mathscr{N} \subseteq \hilbtree$, we have $\prob(\mathscr{N})
:= \sum_{H \in \mathscr{N}} \pr(H)$.  However, there is no known
canonical distribution on $\hilbtree$.  A natural choice on
$\hilbtree_c$ is to mimic uniform distribution by making all vertices
at a fixed height equally likely; given a mass function $f_c \colon
\NN \to [0,1]$, let $\pr_c (H) = 2^{-k} f_c( k )$, for all $H \in
\hilbtree_c$ at height $k$.  Distributions on $\hilbtree$ are then
specified via functions $f_c \colon \NN \to [0,1]$, for all
$\hilbtree_c$, and a mass function $f \colon \NN \setminus \{ 0 \} \to
[0,1]$, by setting $\pr (H) = 2^{-k} f(c) f_c(k)$ (the probability of
$H$), for all $H \in \hilbtree_c$ at height $k$.  Using $f_c(k) :=
2^{-k-1}$ and $f(c) := 2^{-c}$ is sufficient for us, although other
basic examples (geometric, Poisson, etc.) work similarly.

\begin{theorem}
  \label{thm:Hilbertprob}
  Let $\kk$ be an algebraically closed or characteristic $0$ field and
  endow $\hilbtree$ with the structure of a probability space as in
  the preceding paragraph.  The probability that a random Hilbert
  scheme is irreducible and nonsingular is greater than $0.5$.
\end{theorem}

\begin{proof}
  Let $\mathscr{N}$ be the set of nonsingular and irreducible Hilbert
  schemes.  We compute
  \begin{align*}
    \prob(\mathcal{N}) = \sum_{H \in \mathscr{N}} \pr \left( H \right)
    &\ge \sum_{c > 0} \left( f(c) f_c(0) + \sum_{k \ge 1}
    \frac{2^k}{2} \frac{f(c) f_c(k)}{2^k} \right) \\
    &= \sum_{c > 0} \left( \frac{f(c) f_c(0)}{2} + \sum_{k \in \NN}
    \frac{f(c) f_c(k)}{2} \right) \\
    &= \sum_{c > 0} \left( \frac{f(c) f_c(0)}{2} + \frac{f(c)}{2}
    \right) = \sum_{c > 0} \left( \frac{f(c) f_c(0)}{2} \right) +
    \frac{1}{2},
  \end{align*}
  because for all trees $\hilbtree_c$ and heights $k \ge 1$, there are
  at least $2^{k-1}$ vertices corresponding to nonsingular and
  irreducible Hilbert schemes, by Theorem~\ref{thm:SSSunique2} and
  Lemma~\ref{lem:Hilbertirred}.  The computation is similar in a fixed
  codimension.  Hence, the probability that a Hilbert scheme is
  nonsingular and irreducible is greater than $0.5$.
\end{proof}

\begin{proof}[Proof of Theorem~\ref{thm:Hilbertprobintro}]
  Theorem~\ref{thm:Hilbertprob} proves the claim.
\end{proof}

\bibliography{ubiquity}{}
\bibliographystyle{amsalpha}

\end{document}